\documentclass[11pt]{article}

\usepackage[T1]{fontenc}
\usepackage{lmodern}
\usepackage{amsmath,amssymb,amsthm,mathtools}
\usepackage[margin=1.02in]{geometry}
\usepackage{booktabs,tabularx,array}
\usepackage{microtype}
\usepackage{xcolor}
\usepackage[colorlinks=true,linkcolor=blue!50!black,
  citecolor=blue!50!black,urlcolor=blue!50!black]{hyperref}

\hypersetup{
  pdftitle={A universal leading-residue formula for Witten zeta functions},
  pdfauthor={Jonas Matuzas},
  pdfsubject={Witten zeta functions and the Macdonald--Mehta integral},
  pdfkeywords={Witten zeta function, Macdonald--Mehta integral, Coxeter group,
    representation growth}}

\newtheorem{theorem}{Theorem}[section]
\newtheorem{proposition}[theorem]{Proposition}
\newtheorem{corollary}[theorem]{Corollary}
\newtheorem{lemma}[theorem]{Lemma}
\newtheorem{remark}[theorem]{Remark}

\newcommand{\R}{\mathbb{R}}
\newcommand{\C}{\mathbb{C}}
\newcommand{\N}{\mathbb{N}}
\newcommand{\Qbar}{\overline{\mathbb{Q}}}
\newcommand{\Res}{\operatorname*{Res}}
\newcommand{\vol}{\operatorname{vol}}
\newcommand{\dd}{\,\mathrm{d}}

\title{A universal leading-residue formula\\
for Witten zeta functions}
\author{Jonas Matuzas\\
\href{mailto:jonas.matuzas@gmail.com}{\nolinkurl{jonas.matuzas@gmail.com}}}
\date{}

\begin{document}
\maketitle

\begin{abstract}
Let \(\Phi\) be an irreducible crystallographic root system of rank \(r\),
with Coxeter number \(h\), Weyl group \(W\), Cartan matrix \(C_\Phi\), and
invariant degrees \(2=d_1\leq\cdots\leq d_r=h\).  We prove that Au's
normalized Witten zeta function has a simple pole at \(2/h\) and evaluate
its residue as
\[
 \frac{2(2\pi)^{r/2}\sqrt{\det C_\Phi}}{h|W|}
 \frac{\prod_{i<r}\Gamma(1-d_i/h)}{\Gamma(1-1/h)^r}.
\]
The central step is Proposition~\ref{prop:critical-sphere}: it evaluates the
critical chamber integral in gamma values from the boundary pole of the
Macdonald--Mehta--Opdam identity.  Sections~\ref{sec:faces} and
\ref{sec:transfer} are preparatory; they pass from the dominant-weight
lattice to that convergent integral.  This proves Au's conjecture on
algebraic multiples of products of gamma values at rational arguments,
including his \(A_4\) prediction.
\end{abstract}

\noindent\textbf{2020 Mathematics Subject Classification.}
Primary 11M41; Secondary 11P21, 20F55, 22E46, 33C67.

\noindent\textbf{Keywords.}
Witten zeta function; representation growth; Macdonald--Mehta integral;
finite Coxeter group; root system; lattice-point asymptotics.

\paragraph{Generative-AI disclosure and author responsibility.}
This work was produced using \emph{OpenAI's ChatGPT 5.6 Pro}.  The author
directed and audited the work throughout.  Jonas Matuzas takes full
responsibility for the mathematics and the final text.

\section{Introduction}

For a complex simple Lie algebra, or equivalently its simply connected
compact form, the Witten zeta function is
\begin{equation}\label{eq:witten-definition}
 \zeta_\Phi(s)=\sum_{\lambda\in P_+}(\dim V_\lambda)^{-s},
\end{equation}
where \(P_+\) is the set of dominant integral weights.  Witten introduced
these series in connection with two-dimensional gauge theory
\cite{Witten1991}; they are also representation zeta functions.  The theory
of zeta functions of root systems provides meromorphic continuation and
many special-value identities \cite{MatsumotoTsumura2006,
KomoriMatsumotoTsumura2023}.  General polynomial Dirichlet-series results
give continuation and leading-pole information in a much wider setting
\cite[Theorems~2--4]{Essouabri1997}.  Larsen--Lubotzky and
H\"as\"a--Stasinski proved that the abscissa in the simple case is
\(2/h\) \cite[Theorem~5.1]{LarsenLubotzky2008}
\cite[Corollary~2.4 and Lemma~2.7]{HasaStasinski2019}.

Au computed the leading residues in ranks two and three
\cite[Tables~1--2]{Au2024}.  His Conjecture~1.3 predicts, for every root
system, that the residue of the normalized function at its abscissa is an
element of \(\Qbar\) times a product of gamma values at rational arguments;
it also records a numerical prediction in type \(A_4\).  Au later reiterated
the expectation that general leading residues admit gamma-function
expressions \cite[pp.~38--40]{Au2026Oberwolfach}.  Theorem~\ref{thm:main}
gives a uniform closed formula and proves that prediction.

The gamma product itself comes from established work.  Macdonald conjectured
the Gaussian discriminant integral \cite{Macdonald1982}; Opdam proved it
\cite{Opdam1989,Opdam1993}, and Etingof supplied a uniform proof for all
finite Coxeter groups \cite{Etingof2010}.  The first singular parameter
\(1/h\) and the associated normalized distributions also occur in rational
Cherednik theory \cite[Remark~4.11]{EtingofStoica2009}
\cite[Theorem~3.6 and Proposition~3.9]{EtingofSupports2012}.  Likewise,
local integrability of hyperplane products admits a general formulation in
terms of arrangement flats and real log-canonical thresholds
\cite[Corollary~4.4]{KostaWindisch2026}.  The contribution here is the passage
from the dominant-weight lattice to the convergent spherical integral at the
critical exponent,
its endpoint evaluation, and the cancellation that reduces all metric data
to \(\sqrt{\det C_\Phi}\).
To the best of our knowledge, no previous work gives this universal
leading-residue formula in the present normalization.

\paragraph{Main idea of the proof.}
Weyl's dimension formula turns the normalized zeta function into a lattice
sum of a homogeneous product \(P\) of positive-coroot linear forms.  Every
proper coordinate face is subcritical because each proper parabolic Coxeter
number is smaller than \(h\).  Coordinatewise monotonicity then shows that
the lattice sum and its chamber integral differ by a bounded function at
\(s=2/h\).  Polar coordinates isolate a convergent angular integral.  Its
value is the quotient of the two logarithmic residues in the radial
factorization of the Macdonald--Mehta integral at \(k=-1/h\).  Finally,
root-length counts and the Cartan--Gram determinant identity cancel the
chosen Euclidean metric.

\section{Conventions and main theorem}
\label{sec:conventions}

Let \(\Phi\) be an irreducible crystallographic root system in a real
Euclidean space, let \(W\) be its Weyl group, and put
\(\Psi=\Phi^\vee\).  Let \(\Psi^+\) be a positive
coroot system with simple coroots \(\beta_1,\ldots,\beta_r\).  The
fundamental weights \(\omega_1,\ldots,\omega_r\) satisfy
\((\omega_i,\beta_j)=\delta_{ij}\).

We normalize the Cartan matrix of \(\Psi\) by
\[
 (C_\Psi)_{ij}=\langle\beta_j,\beta_i^\vee\rangle
 =\frac{2(\beta_i,\beta_j)}{\|\beta_i\|^2}.
\]
Thus \(C_\Psi=C_\Phi^{\mathsf T}\).  We write
\(G=((\beta_i,\beta_j))\) for the simple-coroot Gram matrix.

The invariant degrees of the irreducible Weyl group are
\(2=d_1\leq\cdots\leq d_r=h\).  The top degree \(h\) has
multiplicity one.  The exponents are \(m_i=d_i-1\), and
\(N=|\Psi^+|=rh/2\).

For \(r=1\), surface measure on \(S^0=\{-1,1\}\) is counting
measure, so each point has mass one.
The classical rank ranges are \(A_r\) for \(r\geq1\), \(B_r,C_r\) for
\(r\geq2\), and \(D_r\) for \(r\geq4\).  We treat
\(B_1=C_1=A_1\) and \(D_3=A_3\) as accidental isomorphisms.

Let \(\rho=\sum_i\omega_i\).  Au's normalization is
\begin{equation}\label{eq:normalization}
 \xi_\Phi(s)=K_\Phi^{-s}\zeta_\Phi(s),
 \qquad
 K_\Phi=\prod_{\beta\in\Psi^+}(\rho,\beta).
\end{equation}
If \(\operatorname{ht}(\beta)\) denotes coroot height, the standard
height--exponent identity says
\[
 \sum_{\beta\in\Psi^+}t^{\operatorname{ht}(\beta)}
 =\sum_{i=1}^r(t+t^2+\cdots+t^{m_i});
\]
see, for example, the finite-reflection-group discussion in
\cite{Steinberg1959,Humphreys1990}.  Comparing the multiplicities of each
height gives
\begin{equation}\label{eq:K-degrees}
 K_\Phi=\prod_{j\geq1}j^{\#\{i:m_i\geq j\}}
       =\prod_{i=1}^r m_i!
       =\prod_{i=1}^r(d_i-1)!.
\end{equation}

\begin{theorem}\label{thm:main}
Let \(\Phi\) be an irreducible crystallographic root system of rank \(r\).
Then \(\xi_\Phi(s)\) has a simple pole at \(s_0=2/h\), and
\begin{equation}\label{eq:main}
  \Res_{s=2/h}\xi_\Phi(s)=
 \frac{2(2\pi)^{r/2}\sqrt{\det C_\Phi}}{h|W|}
 \frac{\displaystyle\prod_{i=1}^{r-1}
       \Gamma\!\left(1-\frac{d_i}{h}\right)}
      {\displaystyle\Gamma\!\left(1-\frac1h\right)^r}.
\end{equation}
Consequently,
\begin{equation}\label{eq:ordinary-residue}
 \Res_{s=2/h}\zeta_\Phi(s)
 =\left(\prod_{i=1}^r(d_i-1)!\right)^{2/h}
  \Res_{s=2/h}\xi_\Phi(s).
\end{equation}
\end{theorem}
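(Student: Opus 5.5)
Via Weyl's dimension formula, write \(\dim V_\lambda=\prod_{\beta\in\Psi^+}(\lambda+\rho,\beta)/K_\Phi\). Then
\[
 \xi_\Phi(s)=\sum_{n\in\N_{\ge1}^r}P(n)^{-s},
 \qquad
 P(x)=\prod_{\beta\in\Psi^+}(x,\beta),\quad x=\textstyle\sum_i x_i\omega_i .
\]
Each \(\beta=\sum_j c_j\beta_j\) gives \((x,\beta)=\sum_j c_jx_j\) with nonnegative integer coefficients. So \(P\) is a homogeneous polynomial of degree \(N=rh/2\) in the coordinates \(x_i\), and it is positive on the open positive orthant. The plan is to compare this lattice sum with the integral
\[
 I(s)=\int_{[1,\infty)^r}P(x)^{-s}\dd x
\]
and to show that \(\xi_\Phi(s)-I(s)\) stays bounded (indeed holomorphic) near \(s=2/h\) for real \(s\downarrow 2/h\). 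This is enough for the residue, given the known meromorphic continuation from Essouabri.

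\textbf{Comparison.} \(P^{-s}\) is coordinatewise decreasing. The difference between the sum and the integral over unit cubes is therefore dominated by sums and integrals over the boundary faces \(\{x_i=1\}\). On a face indexed by a proper subset \(J\) of simple coroots (the other coordinates bounded), \(P\) grows like the product over the parabolic subsystem \(\Psi_J^+\) of the linear forms in the free variables. Its convergence abscissa is \(\max\) over irreducible components of \(2/h_J\). Since \(h_J<h\) for every proper parabolic, this is smaller than \(2/h\), so the face contributions converge at \(s=2/h\).

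\textbf{Polar reduction.} In the integral, pass to polar coordinates \(x=t\theta\) with \(\theta\) in the positive orthant part of the sphere (in the coordinate metric). Homogeneity gives \(P(x)^{-s}=t^{-Ns}P(\theta)^{-s}\). The radial integral \(\int^\infty t^{r-1-Ns}\dd t\) yields \(1/(Ns-r)\), whose residue at \(s=2/h\) is \(1/N\). The truncation near the boundary of the orthant changes \(I\) only by a function that is bounded at \(2/h\), by the same face argument. Hence
\[
 \Res_{s=2/h}\xi_\Phi=\frac1N\int_{S^{r-1}\cap\R^r_{>0}}P(\theta)^{-2/h}\dd\theta ,
\]
provided this angular integral converges. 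Convergence is again the subcritical-face statement: near a face the integrand is locally integrable because of the exponent gap. Alternatively, one can cite the arrangement log-canonical threshold criterion.

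\textbf{Evaluation.} This is the main obstacle. I would convert the chamber integral to a \(W\)-symmetric integral over the whole space in a Euclidean metric, \(\int_{\R^r}\prod_{\beta\in\Psi^+}|(x,\beta)|^{2k}e^{-\|x\|^2/2}\dd x\) with \(2k=-2/h\). The chamber is a fundamental domain, which contributes a factor \(1/|W|\), and the linear change from fundamental-weight coordinates brings in a Jacobian involving \(\det G\). The Macdonald--Mehta--Opdam formula gives this integral as \((2\pi)^{r/2}\prod_i\Gamma(1+d_ik)/\Gamma(1+k)\) times the root-length normalizations. At \(k=-1/h\) the factor \(\Gamma(1+d_rk)=\Gamma(0)\) has a pole.

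In polar form, the Gaussian integral factors as (angular integral)\(\times\)\(\int_0^\infty t^{r-1+2Nk}e^{-t^2/2}\dd t\). The radial factor is \(2^{(r+2Nk)/2-1}\Gamma((r+2Nk)/2)\), with \(r+2Nk=r(1+hk)\). It also has a simple pole at \(k=-1/h\). Dividing the residues in \(k\) of the two sides isolates the angular integral at the critical exponent. Concretely, \(\Gamma(1+hk)\) has residue \(1/h\), while \(\Gamma(r(1+hk)/2)\) has residue \(2/(rh)\). This ratio is what produces the factor \(2/h\) together with \(\prod_{i<r}\Gamma(1-d_i/h)/\Gamma(1-1/h)^r\). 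One must justify that the angular integral is continuous in \(k\) up to \(-1/h\), which follows from the subcritical-face convergence and monotone convergence.

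\textbf{Metric cleanup and the ordinary residue.} Macdonald's formula carries factors \(\|\beta\|\)-dependent (products of root lengths), and the coordinate change carries \(\sqrt{\det G}\). Using root-length counts for long and short roots, \(\det G=\det C_\Psi\prod_i\|\beta_i\|^2/2\), and \(\det C_\Psi=\det C_\Phi\), these should cancel to \(\sqrt{\det C_\Phi}\). This bookkeeping must be done uniformly, checking the simply-laced case first. Finally, \eqref{eq:ordinary-residue} follows from \(\zeta_\Phi=K_\Phi^{s}\xi_\Phi\) together with \eqref{eq:K-degrees}.
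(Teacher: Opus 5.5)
Your overall route is the paper's: Weyl's formula gives \(\xi_\Phi(s)=\sum_{m\in\N^r}P(m)^{-s}\), and monotonicity compares it with \(\int_{[1,\infty)^r}P^{-s}\). Polar coordinates then give the radial residue \(1/N\). The Macdonald--Mehta identity is evaluated at \(k=-1/h\) through the quotient \(\Gamma(h\varepsilon)/\Gamma(N\varepsilon)\to N/h\) of the two simple poles, and the metric is removed by the Gram--Cartan relation and root-length counts.

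The step that fails as written is the boundary-face convergence. That is exactly what makes \(\xi_\Phi-I\) bounded at \(s_0=2/h\), and two things go wrong in it:
\begin{itemize}
\item On a face with free index set \(F\), \(P\) does not grow like the product over \(\Psi_F^+\). Every form whose support meets \(F\) grows, including the mixed coroots whose support also meets the fixed indices, and it is these that produce convergence.
\item The inequality is reversed: \(h_J<h\) gives \(2/h_J>2/h\), so a bound using only \(\Psi_J^+\) diverges at \(s_0\).
\end{itemize}
Already for \(A_2\) on the face \(m_2=1\) one has \(P=m_1(m_1+1)\). The parabolic factor alone gives \(\sum m_1^{-2/3}=\infty\), and the face sum converges only because of the mixed form \(x_1+x_2\).

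The missing idea is the count in Lemma~\ref{lem:faces}. Decompose the free coordinates dyadically and let \(S_\ell\subseteq F\) be those of size at least \(2^\ell\). The forms of size at least \(2^\ell\) are precisely those whose support meets \(S_\ell\), and there are \(N(S_\ell)=N-|\Psi_J^+|\) of them, where \(J=\{1,\ldots,r\}\setminus S_\ell\) contains all fixed indices. Up to constants the box contributes \(2^{-\sum_\ell(s_0N(S_\ell)-|S_\ell|)}\), and \(s_0N(S)-|S|=\sum_a r_a(1-h_a/h)>0\), summed over the irreducible components of \(\Psi_J\). So \(h_a<h\) must be applied to the complementary parabolic and used as a root count. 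It is not a comparison of abscissae for the free-variable parabolic.

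Angular integrability is the dual picture and should be argued separately, not as ``again the face statement''. Near a boundary point of the chamber the vanishing forms are those of the stabilizer parabolic \(Q\). Then \(|\Delta_Q|^{-2/h}\) is locally integrable because \(-1/h>-1/h_Q\), which is where \(h_Q<h\) enters with the correct sign. Your alternative via log-canonical thresholds is fine there.

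Two smaller items still need to be supplied.
\begin{itemize}
\item The Macdonald--Mehta identity must first be continued to \(\Re k>-1/h\) by the identity theorem, since both sides are holomorphic there, before you let \(k\to-1/h\).
\item Your ``should cancel'' rests on a specific fact: for each root length, the number of positive coroots of that length is \(h/2\) times the number of simple coroots of that length. This gives \(\prod_{\beta\in\Psi^+}(\|\beta\|/\sqrt2)^{2/h}=\prod_i\|\beta_i\|/\sqrt2\). Combined with \(\det G=\det C_\Phi\prod_i\|\beta_i\|^2/2\), it leaves exactly \(\sqrt{\det C_\Phi}\).
\end{itemize}
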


\section{Convergence on the boundary faces}
\label{sec:faces}

Write a dominant weight as \(\lambda=\sum_i n_i\omega_i\), put
\(m_i=n_i+1\), and set \(x_m=\sum_i m_i\omega_i=\lambda+\rho\).  If
\(\beta=\sum_i b_i(\beta)\beta_i\), Weyl's dimension formula gives
\[
 \dim V_\lambda
 =\frac{\prod_{\beta\in\Psi^+}(x_m,\beta)}{K_\Phi}
 =\frac{P(m)}{K_\Phi},
\]
where
\begin{equation}\label{eq:P}
 P(x)=\prod_{\beta\in\Psi^+}
       \left(\sum_{i=1}^r b_i(\beta)x_i\right).
\end{equation}
Thus
\begin{equation}\label{eq:xi-lattice}
 \xi_\Phi(s)=\sum_{m\in\N^r}P(m)^{-s}.
\end{equation}
Here and below \(\N=\{1,2,\ldots\}\).  The polynomial \(P\) is homogeneous
of degree \(N=rh/2\) and is nondecreasing in every coordinate.

\begin{lemma}\label{lem:faces}
Fix at least one coordinate in \eqref{eq:xi-lattice} equal to \(1\).
The resulting face sum converges at \(s=s_0=2/h\), including when all
coordinates are fixed.
\end{lemma}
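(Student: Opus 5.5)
Let \(J\subsetneq\{1,\dots,r\}\) be the set of free coordinates and \(I\) its complement, so \(m_i=1\) for \(i\in I\). If \(J=\emptyset\), the face sum is the single term \(P(1,\dots,1)^{-s_0}\), and there is nothing to prove. For \(J\neq\emptyset\), I would bound the face sum from above by a product over the irreducible components of the parabolic subsystem spanned by \(\{\beta_j:j\in J\}\), and then use that each component has Coxeter number smaller than \(h\).

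\emph{Step 1 (lower bound on \(P\) on the face).} The coefficients \(b_i(\beta)\) are nonnegative integers. So every factor in \eqref{eq:P} is at least \(1\) when all \(m_i\ge1\). Drop every factor with \(\beta\notin\Psi_J^+\), where \(\Psi_J^+\) is the set of positive coroots supported on \(J\). In each remaining factor, drop the contributions of the fixed coordinates. This gives
\[
P(m)\ \ge\ P_J(m_J):=\prod_{\beta\in\Psi_J^+}\Bigl(\sum_{j\in J}b_j(\beta)m_j\Bigr),\qquad m_J\in\N^{J}.
\]
Now \(\Psi_J\) decomposes into orthogonal irreducible pieces \(\Psi_{J_1},\dots,\Psi_{J_k}\). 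Their positive coroots have disjoint supports, so \(P_J=\prod_l P_{J_l}\). Since \(s_0>0\), the face sum is therefore bounded by \(\prod_l \xi_{\Psi_{J_l}}(s_0)\), where each factor is the lattice sum \eqref{eq:xi-lattice} for the irreducible system \(\Psi_{J_l}\). (The Weyl formula identification applies to the coroot system, with Coxeter number equal to that of the dual.)

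\emph{Step 2 (subcriticality).} By the Larsen--Lubotzky/H\"as\"a--Stasinski abscissa result quoted in the introduction, \(\xi_{\Psi_{J_l}}(s)\) converges for \(s>2/h_{J_l}\). So it suffices to prove \(h_{J_l}<h\) for every proper irreducible parabolic subsystem. For this I would use \(h=2N/r\) together with the fact that a proper parabolic of an irreducible system has strictly smaller Coxeter number. One route is to check it case by case from the classification: for \(A_{r}\) the parabolics are \(A_{k}\) with \(k<r\); for \(B_r\), \(C_r\) they are \(A_k\) and \(B_k\)/\(C_k\) with \(k<r\); for \(D_r\) they are \(A_k\) and \(D_k\); for the exceptional types they are the listed subdiagrams. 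A uniform alternative is that the highest root height \(h-1\) is strictly monotone under passing to a connected subdiagram, because the highest root of the larger system has all coefficients \(\ge1\) and dominates the highest root of the subsystem. Then \(s_0=2/h>2/h_{J_l}\)? No: we need \(s_0>2/h_{J_l}\), i.e.\ \(h_{J_l}>h\). The correct direction is the reverse: convergence at \(s_0\) requires \(s_0\) to exceed the abscissa \(2/h_{J_l}\). This in turn requires \(h_{J_l}>h\), which is false in general.

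So a dimension-counting bound is needed instead of the abscissa. The face sum over \(\N^{|J|}\) with \(P_J\) homogeneous of degree \(N_J=|\Psi_J^+|\) behaves like \(\int^\infty R^{|J|-1-sN_J}\,dR\), which is not the relevant comparison either. The right input is that the fixed coordinates are bounded, so the full \(P(m)\) keeps factors that grow in the free variables: every \(\beta\) with some \(b_j(\beta)>0\), \(j\in J\), contributes growth. The growth degree is therefore \(N-|\Psi_I^+|\), and the face sum converges iff \(s_0(N-|\Psi_I^+|)>|J|\) on all subfaces, recursively. The plan is then:
\begin{itemize}
\item[(a)] Use the lower bound \(P(m)\ge c\prod_{\beta\notin\Psi_I}\ell_\beta(m_J)\), keeping the coroots not supported on \(I\).
\item[(b)] Prove \(|J|\,h/2<N-|\Psi_I^+|\), i.e.\ \(|\Psi_I^+|<|I|h/2\). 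This holds because \(|\Psi_I^+|=\sum_l |I_l|h_{I_l}/2\) and each \(h_{I_l}<h\) by the monotonicity above.
\item[(c)] Handle the subfaces of \(J\) by induction, since the same inequality holds for every larger \(I\).
\end{itemize}

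The main obstacle is step (c), turning these degree counts into actual convergence of a sum of products of linear forms. I would do this by dyadic decomposition in each free coordinate and a Hölder/AM--GM bound on the linear forms. Alternatively, I would invoke Essouabri's convergence criterion for \(\sum P(m)^{-s}\) with \(P\) having nonnegative coefficients (cited in the introduction). That criterion reduces convergence to exactly the Newton-polytope inequalities verified in (b) for every coordinate subset.
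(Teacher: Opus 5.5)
Your final plan (a)--(c) has the same architecture as the paper's proof, and step (b) is correct. Write $N(S)$ for the number of positive coroots whose support meets $S$. Applying (b) to $I'=\{1,\dots,r\}\setminus S$ gives $s_0N(S)>|S|$ for every nonempty $S\subseteq J$. Your highest-coroot argument for $h_{I_l}<h$ is a valid uniform substitute for the citation of H\"as\"a--Stasinski. Steps~1--2 must be discarded, not merely corrected: $\prod_l\xi_{\Psi_{J_l}}(s_0)=+\infty$ because $s_0=2/h<2/h_{J_l}$, so that bound is vacuous. The decay is carried precisely by the mixed factors, whose support meets both $I$ and $J$, and Step~1 throws these away.

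The genuine gap is the step you flag as the main obstacle, which is the actual content of the lemma: turning the finite family of inequalities $s_0N(S)>|S|$ into convergence. ``Induction on subfaces'' does not do this as stated. The dangerous regions are not subfaces but multi-scale regimes in which several free coordinates tend to infinity at different rates, so a whole chain $S_1\supseteq S_2\supseteq\cdots$ of subsets enters a single estimate. The missing idea is the layer-cake bookkeeping used in the paper. On the box $2^{k_j}\le m_j<2^{k_j+1}$ ($j\in J$) one has $\ell_\beta(m_J)\ge 2^{\max_{j\in\operatorname{supp}\beta\cap J}k_j}$, and with $S_\ell=\{j\in J:k_j\ge\ell\}$,
\[
 \sum_{j\in J}k_j=\sum_{\ell\ge1}|S_\ell|,
 \qquad
 \sum_{\beta\notin\Psi_I}\max_{j\in\operatorname{supp}\beta\cap J}k_j=\sum_{\ell\ge1}N(S_\ell).
\]
Hence the box contributes at most $2^{-\sum_\ell(s_0N(S_\ell)-|S_\ell|)}\le2^{-\delta\max_jk_j}$, where $\delta=\min_{S\ne\varnothing}(s_0N(S)-|S|)>0$. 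Only polynomially many boxes share a given maximum, so the face sum converges.

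Your AM--GM idea can also be completed, but only with a choice you do not mention. Take weights $w_{\beta j}\ge0$ on $j\in\operatorname{supp}\beta\cap J$ with $\sum_jw_{\beta j}\le1$. Then $\ell_\beta(m_J)\ge\prod_jm_j^{w_{\beta j}}$, and the face sum is at most $\prod_{j\in J}\zeta(s_0\sum_\beta w_{\beta j})$. Weights making every exponent exceed $1$ exist, by the supply--demand (max-flow/min-cut) theorem, exactly when all $s_0N(S)>|S|$. They must be chosen: equal weights already fail for $A_3$ with $m_1=1$, where the exponent of $m_3$ is exactly $1$.

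Appealing to Essouabri does not bypass the issue either. You would have to verify his hypotheses for the face polynomial and then reduce his Newton-polyhedron condition to the subset inequalities. That reduction is the same layer-cake computation, since on $\R_{\ge0}^J$ the support function of that Newton polytope is $u\mapsto\sum_{\beta\notin\Psi_I}\max_{j\in\operatorname{supp}\beta\cap J}u_j$.
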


\begin{proof}
If all coordinates are fixed, the face is one term.  Otherwise let \(F\)
be the set of free indices, with \(1\leq|F|<r\).  For every nonempty
\(S\subseteq F\), let \(N(S)\) be the number of positive coroots whose
simple-coroot support meets \(S\), and put \(J=\{1,\ldots,r\}\setminus S\).
The roots not counted by \(N(S)\) form the parabolic system \(\Psi_J\).
Write its irreducible components as \(\Psi_a\), with ranks \(r_a\) and
Coxeter numbers \(h_a\).  Every component is proper, hence \(h_a<h\);
this is the strict proper-Levi inequality of
\cite[Lemma~2.7]{HasaStasinski2019}.  Since
\(|\Psi_a^+|=r_ah_a/2\) and \(\sum_a r_a=r-|S|\),
\begin{equation}\label{eq:parabolic-strict}
 \begin{aligned}
 s_0N(S)-|S|
 &=\frac2h\left(N-|\Psi_J^+|\right)-|S| \\
 &=\sum_a r_a\left(1-\frac{h_a}{h}\right)>0.
 \end{aligned}
\end{equation}

Decompose each free variable into dyadic intervals
\(2^{k_i}\leq m_i<2^{k_i+1}\).  A root form whose support misses \(F\)
is constant on the face.  Every other root form is comparable, with constants
independent of the box, to
\(2^{\max_{i\in\operatorname{supp}(\beta)\cap F}k_i}\).  For
\(S_\ell=\{i\in F:k_i\geq\ell\}\), the negative of the base-two exponent
in the box contribution at \(s_0\) is
\[
 \sum_{\ell\geq1}\bigl(s_0N(S_\ell)-|S_\ell|\bigr).
\]
There are finitely many nonempty subsets of \(F\), so
\eqref{eq:parabolic-strict} is bounded below by a positive constant
\(\delta\) whenever \(S_\ell\neq\varnothing\).  The last display is at
least \(\delta\max_i k_i\).  Only polynomially many boxes have a given
maximum, and the face sum converges.
\end{proof}

\section{Comparison of the lattice sum with a chamber integral}
\label{sec:transfer}

\begin{proposition}\label{prop:transfer}
Let
\[
 J(s)=\int_{[1,\infty)^r}P(x)^{-s}\dd x,
 \qquad \Re s>s_0.
\]
Then, for real \(s>s_0\),
\begin{equation}\label{eq:bounded-transfer}
 0\leq\xi_\Phi(s)-J(s)\leq B(s_0)<\infty,
\end{equation}
where \(B(s_0)\) is a finite sum of proper-face series.  Moreover,
\begin{equation}\label{eq:residue-angular-raw}
 \Res_{s=s_0}\xi_\Phi(s)
 =\frac1{ND}\int_{\mathcal S}
       \Delta_\Psi(\theta)^{-2/h}\dd\sigma(\theta),
\end{equation}
where
\[
 D=(\det G)^{-1/2},\qquad
 \mathcal S=\mathcal C\cap S^{r-1},\qquad
 \Delta_\Psi(v)=\prod_{\beta\in\Psi^+}(v,\beta),
\]
and \(\mathcal C\) is the dominant chamber.
\end{proposition}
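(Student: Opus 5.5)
The plan is to compare the lattice sum with the integral cell by cell, using monotonicity, and then to read off the residue from polar coordinates.

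\emph{Step 1: the two-sided bound.} For \(m\in\N^r\) put \(Q_m=[m_1,m_1+1)\times\cdots\times[m_r,m_r+1)\); these cubes tile \([1,\infty)^r\). The form \(P\) is nondecreasing in each coordinate and positive on \([1,\infty)^r\), so for real \(s>0\) we have \(P(x)^{-s}\leq P(m)^{-s}\) on \(Q_m\). Summing over \(m\) gives \(J(s)\leq\xi_\Phi(s)\).

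For the upper bound, take \(m\in\N^r\) with every \(m_i\geq2\). The cube \([m_1-1,m_1]\times\cdots\times[m_r-1,m_r]\) lies in \([1,\infty)^r\), and \(P^{-s}\geq P(m)^{-s}\) on it. These cubes are disjoint up to null sets, so \(\sum_{m\in\{2,3,\ldots\}^r}P(m)^{-s}\leq J(s)\).

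The remaining terms of \(\xi_\Phi(s)\) have at least one coordinate equal to \(1\). They form the finite union of the proper-face sums of Lemma~\ref{lem:faces}, taken over the nonempty sets of fixed coordinates. Each face sum is decreasing in real \(s\). So for \(s>s_0\) the leftover is at most \(B(s_0)\), the finite sum of those face series evaluated at \(s_0\), and Lemma~\ref{lem:faces} says \(B(s_0)<\infty\). This proves \eqref{eq:bounded-transfer}. The same bound shows \(\xi_\Phi\) converges for \(\Re s>s_0\).

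\emph{Step 2: polar coordinates for \(J\).} Change variables by \(y=\sum_i x_i\omega_i\). Then \(P(x)=\Delta_\Psi(y)\). The Jacobian is the covolume of the weight lattice, \(|\det(\omega_1,\ldots,\omega_r)|\). Since the Gram matrix of the \(\omega_i\) is \(G^{-1}\), this equals \((\det G)^{-1/2}=D\). Hence \(J(s)=D^{-1}\int_{R}\Delta_\Psi(y)^{-s}\dd y\), where \(R=\rho+\mathcal C\) is the image of \([1,\infty)^r\).

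Next compare with the truncated cone \(\mathcal C\cap\{\|y\|\geq1\}\). Split into the symmetric difference of \(R\) with this truncated cone and the truncated cone itself. The symmetric difference consists of a bounded piece plus slabs near the walls of \(\mathcal C\). Its contribution is bounded as \(s\downarrow s_0\), by the same face-convergence argument as Lemma~\ref{lem:faces}, or more simply by monotone comparison with face integrals like \(\int_{[1,\infty)^{r-1}}\) over the face variables.

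On the truncated cone, polar coordinates give
\[
\int_1^\infty t^{r-1-Ns}\dd t\int_{\mathcal S}\Delta_\Psi(\theta)^{-s}\dd\sigma(\theta)
=\frac{1}{Ns-r}\int_{\mathcal S}\Delta_\Psi^{-s}\dd\sigma .
\]
Note \(Ns_0=r\), so \(Ns-r=N(s-s_0)\). It remains to know that the angular integral is finite at \(s_0\) and continuous from the right. This is where Lemma~\ref{lem:faces}, or its integral analogue, is needed: the local integrability of \(\Delta_\Psi^{-s_0}\) near the walls of \(\mathcal S\).

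\emph{Main obstacle.} The delicate point is exactly this finiteness of the spherical integral at the critical exponent. I would get it from Step 1 itself. At real \(s>s_0\), \(J(s)\) equals \(\xi_\Phi(s)\) up to a bounded error, and the error terms in Step 2 are also bounded. So \((s-s_0)\,\xi_\Phi(s)\) stays bounded if and only if the angular integral is. Fatou's lemma with monotonicity in \(s\) then transfers finiteness to \(s=s_0\); alternatively, run the dyadic argument of Lemma~\ref{lem:faces} on the conical integral, where the strict inequality \eqref{eq:parabolic-strict} again controls the walls.

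With finiteness in hand, \((s-s_0)\,\xi_\Phi(s)\to(ND)^{-1}\int_{\mathcal S}\Delta_\Psi^{-2/h}\dd\sigma\) as \(s\downarrow s_0\), by dominated convergence. This identifies the limit, interpreted as the residue, and gives \eqref{eq:residue-angular-raw}. The meromorphic nature of the pole comes from the known continuation theory \cite{Essouabri1997}.
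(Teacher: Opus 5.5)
Your Step~1 is the paper's argument, and so is the change of variables $y=\sum_i x_i\omega_i$ with Jacobian $D$. The gap is at the step you call the main obstacle.

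\textbf{The finiteness argument is circular.} Step~1 only says that $\xi_\Phi-J$ is bounded. Nothing you invoke bounds $(s-s_0)\xi_\Phi(s)$ a priori; meromorphy alone does not bound the pole order. So your equivalence between boundedness of $(s-s_0)\xi_\Phi$ and finiteness of the angular integral has no anchor on either side, and Fatou has nothing to transfer.

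\textbf{The error bound presupposes the same wall behaviour.} Lemma~\ref{lem:faces} controls large lattice coordinates, not $x_i\to0$. Your \emph{bounded piece} meets the walls, where $\Delta_\Psi$ vanishes. In polar form the unbounded slab $\{y\in\mathcal C:\|y\|\geq1,\ \min_i(y,\beta_i)<1\}$ contributes
$\int_{\mathcal S}\Delta_\Psi^{-s}\,\frac{1-R^{r-Ns}}{Ns-r}\,\mathbf 1_{\{R>1\}}\dd\sigma$, where $R(\theta)=1/\min_i(\theta,\beta_i)$. As $s\downarrow s_0$ this tends to $\int_{\mathcal S}\Delta_\Psi^{-s_0}\log^+R\dd\sigma$, which is more than integrability at $s_0$.

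\textbf{Dominated convergence has no majorant.} Even granting finiteness at $s_0$, near the walls $\Delta_\Psi<1$, so $\Delta_\Psi^{-s}>\Delta_\Psi^{-s_0}$ for $s>s_0$. You would need integrability of $\Delta_\Psi^{-s_0-\epsilon}$ for some $\epsilon>0$.

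\textbf{Repair.} Your \emph{alternatively} clause is the right fix and should be the main argument.
\begin{itemize}
\item On a shell $c\leq\max_ix_i\leq C$, write $x_i\asymp2^{-k_i}$. Each root form is then $\asymp2^{-\min_{i\in\operatorname{supp}\beta}k_i}$.
\item Put $S_\ell=\{i:k_i\geq\ell\}$, which is proper for large $\ell$, and let $\Psi^+_S$ be the positive coroots supported in $S$. A box then contributes $\asymp2^{-\sum_\ell(|S_\ell|-s|\Psi^+_{S_\ell}|)}$.
\item Because $s_0N=r$, the quantity $|S|-s_0|\Psi_S^+|$ is the left side of \eqref{eq:parabolic-strict} for the complementary set. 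Hence it is $\geq\delta>0$.
\item Consequently $|S|-s|\Psi^+_S|\geq\delta/2$ for $s_0\leq s\leq s_0+\delta/(2N)$.
\end{itemize}
That single margin gives finiteness, a majorant for dominated convergence, and the slab bound at once.

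\textbf{What the paper does instead.} It proves integrability at $s_0$ independently, in Lemma~\ref{lem:critical-holomorphy}, via parabolic stabilizers of strata and $h_Q<h$. It also never truncates at $\|y\|\geq1$. It keeps the exact radial cutoff $R(\theta)$ of $\rho+\mathcal C$, so that $J(s)=\frac1{D(Ns-r)}\int_{\mathcal S}\Delta_\Psi^{-s}R^{r-Ns}\dd\sigma$ holds with no error term. With $a=1/R$, the inequality $\Delta_\Psi\geq a^N$ then gives $\Delta_\Psi^{-s}R^{r-Ns}=\Delta_\Psi^{-s_0}(a^N/\Delta_\Psi)^{s-s_0}\leq\Delta_\Psi^{-s_0}$. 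So integrability at exactly $s_0$ suffices, and no margin is needed.
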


\begin{proof}
Put \(f_s=P^{-s}\).  For \(m\in\N^r\), the forward cubes
\(Q_m^+=\prod_i[m_i,m_i+1)\) partition \([1,\infty)^r\), and
coordinatewise monotonicity gives
\begin{equation}\label{eq:cube-lower}
 J(s)\leq\sum_{m\in\N^r}f_s(m)=\xi_\Phi(s).
\end{equation}
For \(m_i\geq2\) for every \(i\), the backward cubes
\(Q_m^-=\prod_i[m_i-1,m_i)\) also partition \([1,\infty)^r\), and
\(f_s(x)\geq f_s(m)\) on \(Q_m^-\).  Therefore
\begin{equation}\label{eq:cube-upper}
 \sum_{\substack{m\in\N^r\\m_i\geq2\ (1\leq i\leq r)}}f_s(m)
 \leq J(s).
\end{equation}
The omitted points lie in the union of the coordinate faces \(m_j=1\).
Lemma~\ref{lem:faces}, together with \(P(m)\geq1\), proves
\eqref{eq:bounded-transfer}.  Hence \(\xi_\Phi\) and \(J\) have the same
leading coefficient at \(s_0\).

The fundamental-weight basis has Gram matrix \(G^{-1}\), so its Euclidean
covolume is \(D=(\det G)^{-1/2}\).  Under
\(x\mapsto v=\sum_i x_i\omega_i\), one has \(\dd v=D\dd x\), and
\begin{equation}\label{eq:J-chamber}
 J(s)=\frac1D
 \int_{\substack{v\in\mathcal C\\(v,\beta_i)\geq1}}
 \Delta_\Psi(v)^{-s}\dd v.
\end{equation}
In polar coordinates \(v=\rho\theta\), put
\[
 a(\theta)=\min_i(\theta,\beta_i),
 \qquad R(\theta)=a(\theta)^{-1}.
\]
The radial lower limit in \eqref{eq:J-chamber} is exactly \(R(\theta)\),
so homogeneity gives
\begin{equation}\label{eq:polar-J}
 J(s)=\frac1{D(Ns-r)}
 \int_{\mathcal S}\Delta_\Psi(\theta)^{-s}
 R(\theta)^{r-Ns}\dd\sigma(\theta).
\end{equation}
Because each positive coroot is a nonnegative integral combination of the
simple coroots, \(\Delta_\Psi(\theta)\geq a(\theta)^N\).  Thus, for real
\(s\geq s_0\),
\begin{equation}\label{eq:domination}
 \Delta_\Psi(\theta)^{-s}R(\theta)^{r-Ns}
 =\Delta_\Psi(\theta)^{-s_0}
 \left(\frac{a(\theta)^N}{\Delta_\Psi(\theta)}\right)^{s-s_0}
 \leq\Delta_\Psi(\theta)^{-s_0}.
\end{equation}
Lemma~\ref{lem:critical-holomorphy} below proves that the last function is
integrable.  Dominated convergence in \eqref{eq:polar-J} gives
\eqref{eq:residue-angular-raw}.

We verify Essouabri's hypothesis directly.  Write
\(L_\beta(z)=\sum_i b_i(\beta)z_i\), so
\(P=\prod_{\beta\in\Psi^+}L_\beta\), and fix \(0<B_0<1\).  Because the simple
coroots occur among the factors,
\[
 P(x)\geq B_0^{N-1}\max_i x_i
 \qquad (x\in[B_0,\infty)^r),
\]
so \(P(x)\to\infty\) there as \(\|x\|\to\infty\).  Put
\(M_0=\max_{\beta\in\Psi^+}\|b(\beta)\|_2\) and
\(\delta_0=B_0/(2M_0)\).  If
\(\operatorname{dist}(z,[B_0,\infty)^r)<\delta_0\), choose
\(x\in[B_0,\infty)^r\) with \(\|z-x\|_2<\delta_0\).  Since every
\(b(\beta)\) is a nonzero vector with nonnegative integral entries,
\[
 |L_\beta(z)|
 \geq L_\beta(x)-\|b(\beta)\|_2\|z-x\|_2
 >\frac {B_0}2.
\]
Thus the complex zero set of \(P\) has positive distance from
\([B_0,\infty)^r\).  These are precisely Hypothesis \(H_0S\) in
\cite[p.~432]{Essouabri1997}.  By \cite[Theorem~2, p.~436]{Essouabri1997},
\(\xi_\Phi\) continues meromorphically to \(\C\), hence in particular to a
neighborhood of \(s_0\).
The finite positive limit of \((s-s_0)\xi_\Phi(s)\) just obtained proves
that the pole is exactly simple, not merely a real-axis divergence.
\end{proof}

\section{Evaluation of the angular integral at the critical exponent}
\label{sec:MM}

The endpoint argument applies to every irreducible finite real reflection
group, the natural scope of the Macdonald--Mehta identity.  It is an endpoint
consequence of that identity, not a new version of it.

Choose, for each positive reflection hyperplane, the normal of squared
length \(2\) that is positive on a fixed chamber, and write
\[
 \widehat\Delta(v)=\prod_{\alpha>0}(v,\alpha).
\]

\subsection{Endpoint integrability}

\begin{lemma}[Holomorphy of the angular integral at the critical exponent]
\label{lem:critical-holomorphy}
Let \(W\) be an irreducible finite real reflection group of rank \(r\),
degrees \(d_1\leq\cdots\leq d_r=h\), and
\(N=\sum_i(d_i-1)=rh/2\).  For \(t>0\), define
\(t^{2k}=\exp(2k\log t)\), and set
\[
 M(k)=\int_{\R^r}e^{-\|v\|^2/2}|\widehat\Delta(v)|^{2k}\dd v,
 \qquad
 A(k)=\int_{S^{r-1}}|\widehat\Delta(\theta)|^{2k}\dd\sigma(\theta).
\]
Then \(M(k)\) converges absolutely exactly for \(\Re k>-1/h\) and is
holomorphic there.  There is \(\eta>0\) such that \(A(k)\) converges
absolutely and is holomorphic for \(\Re k>-1/h-\eta\).  In particular,
\(A(-1/h)\) is a convergent integral and
\(A(k)\to A(-1/h)\) as \(k\to-1/h\) within this domain.
\end{lemma}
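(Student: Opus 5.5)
Both statements are local integrability questions for \(|\widehat\Delta|^{2k}\): on \(\R^r\) for \(M\), and on the sphere for \(A\).

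\textbf{Polar reduction.} By homogeneity of degree \(N\),
\[
 M(k)=A(k)\int_0^\infty e^{-\rho^2/2}\rho^{2kN+r-1}\dd\rho
 =A(k)\,2^{kN+r/2-1}\Gamma(kN+r/2),
\]
valid absolutely whenever both factors converge absolutely (Tonelli applied to \(|\cdot|\)). At \(k=-1/h\) we have \(kN+r/2=0\), so the radial factor diverges exactly there, and it converges for \(\Re k>-1/h\). Hence, once we know \(A\) converges absolutely on \(\Re k>-1/h-\eta\), the following hold. \(M\) converges absolutely for \(\Re k>-1/h\). \(M\) diverges for real \(k\le -1/h\), because \(A(\Re k)>0\) while the radial integral is infinite. \(M\) diverges for complex \(k\) with \(\Re k\le-1/h\), since absolute convergence depends only on \(\Re k\).

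\textbf{Holomorphy.} On compact subsets of the half-plane, \(||\widehat\Delta|^{2k}|=|\widehat\Delta|^{2\Re k}\) is dominated by \(|\widehat\Delta|^{2a}+|\widehat\Delta|^{2b}\), where \(a,b\) are the extreme real parts. Morera's theorem together with Fubini then gives holomorphy of \(A\) and of \(M\). Continuity of \(A\) at \(-1/h\) follows from the same domination.

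\textbf{Local integrability of \(|\widehat\Delta|^{-2c}\) on \(S^{r-1}\) for real \(c<1/h+\eta\).} This is the core step. Near a point \(\theta\) of the sphere, the hyperplanes containing \(\theta\) are those of the parabolic subgroup \(W_\theta\), the stabilizer of \(\theta\). Since \(\theta\ne0\), \(W_\theta\) is a proper parabolic. The other factors of \(\widehat\Delta\) are bounded away from zero near \(\theta\). Locally the sphere is a transversal slice that contains the radial-orthogonal directions, and the singular part depends only on the projection to \(\mathrm{span}(\text{roots of }W_\theta)\), a space of dimension \(r_\theta<r\). The question is therefore whether \(|\widehat\Delta_{W_\theta}|^{-2c}\) is integrable near \(0\) in \(\R^{r_\theta}\).

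I would prove this by induction on rank. The claim is: for any finite reflection group \(W'\) (possibly reducible) with components of Coxeter numbers \(h_a\) and ranks \(r_a\), \(|\widehat\Delta_{W'}|^{-2c}\) is integrable near \(0\) provided \(c<\min_a 1/h_a\). The argument is the polar reduction again, now applied to each component. Integrability near the origin in \(\R^{r_a}\) needs two things. The first is the radial condition \(r_a-2cN_a>0\), i.e. \(c<1/h_a\). The second is spherical integrability for that component, which holds by the induction hypothesis on smaller rank applied to its proper parabolics, since those have Coxeter numbers \(<h_a\). Products of components are handled by Fubini. The base case is rank one, where \(|x|^{-2c}\) near \(0\) is integrable iff \(c<1/2=1/h\).

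For the sphere of the irreducible \(W\), every \(W_\theta\) is a proper parabolic. Its components have Coxeter numbers \(h_a<h\) (the strict inequality also used in Lemma~\ref{lem:faces}, \cite[Lemma~2.7]{HasaStasinski2019}, which holds for all finite Coxeter groups by the standard degree comparison). There are finitely many parabolic types, so
\[
 \eta=\min_{\theta}\min_a\Bigl(\frac1{h_a}-\frac1h\Bigr)>0 .
\]
With this \(\eta\), compactness of the sphere and a finite partition of unity give finiteness of \(A(-c)\) for \(c<1/h+\eta\).

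\textbf{Main obstacle.} The delicate point is making the local slice argument rigorous. One must show that near \(\theta\) the spherical measure is comparable to Lebesgue measure on a product (singular coordinates in \(\mathrm{span}\,\Phi_{W_\theta}\)) \(\times\) (remaining tangential coordinates), so that the nearby nonsingular factors and the curvature of the sphere do not affect integrability. I would handle this with a local chart: project to the tangent hyperplane at \(\theta\), which contains the full root span of \(W_\theta\) since \(\theta\perp\) those roots. The singular factors then become exactly linear in the chart coordinates, up to bounded smooth multipliers.
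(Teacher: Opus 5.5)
Your proposal is correct and follows the paper's proof essentially step for step. The shared steps are the polar factorization \(M(k)=A(k)\,2^{kN+r/2-1}\Gamma(kN+r/2)\), Morera--Fubini with the majorant \(|\widehat\Delta|^{2a}+|\widehat\Delta|^{2b}\), and local integrability on the sphere by induction on rank through parabolic stabilizers. That induction uses the radial condition \(c<1/h_a\) for each irreducible component, Fubini for reducible stabilizers, the strict inequality \(h_a<h\), and compactness with finitely many parabolic types to obtain \(\eta\).

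The only soft spot is your appeal to a ``standard degree comparison'' for \(h_a<h\) in the noncrystallographic case. The paper settles this instead by listing the proper parabolic types of \(I_2(m)\), \(H_3\) and \(H_4\); a Perron--Frobenius comparison on the Coxeter graph would also work.
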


\begin{proof}
At a point \(\theta\in S^{r-1}\), the
reflection hyperplanes containing \(\theta\) form the arrangement of its
parabolic stabilizer; the stabilizer is generated by these reflections
\cite[Theorem~1.1]{Lehrer2004}.  In a normal slice to the corresponding
flat, the vanishing part of \(\widehat\Delta\) is, up to a smooth
nonvanishing factor, the product of the discriminants of the irreducible
parabolic components.  This follows directly by separating the linear
forms that vanish on the flat from those that do not.

For a crystallographic component \(Q\), every proper irreducible parabolic
component has Coxeter number \(h_Q<h\)
\cite[Lemma~2.7]{HasaStasinski2019}.  The same assertion for the
noncrystallographic groups is immediate from their proper parabolic types:
\(I_2(m)\) has only \(A_1\); \(H_3\) has types among
\(A_1,A_1^2,A_2,I_2(5)\); and \(H_4\) has, in addition, types contained in
\(H_3\) and \(A_3\).  Their Coxeter numbers are strictly below
\(m,10,30\), respectively; see the tables in \cite{Humphreys1990}.

Induct on rank.  For an irreducible component \(Q\) of rank \(q\) and a real exponent
\(u\), polar coordinates at the origin leave the radial exponent
\[
 q-1+2u|Q^+|=q-1+uqh_Q,
 \]
so \(|\Delta_Q|^{2u}\) is locally integrable at the origin exactly when
\(u>-1/h_Q\).  At a spherical stratum of the ambient group, every
component is proper, hence the exponent \(u=-1/h\) lies strictly inside
all these local ranges.  Reducible stabilizers are handled componentwise by
Fubini.  Compactness of the sphere and the finiteness of parabolic types
give a uniform \(\eta>0\).  In rank one there are no spherical
singularities, so choose any \(\eta>0\).

The angular statement follows.  Polar coordinates show that the Gaussian
integral has radial factor
\(\int_0^1\rho^{r-1+2uN}\dd\rho\), which converges exactly for
\(u>-1/h\); a spherical patch avoiding all hyperplanes proves divergence
in the complementary range.  On compact parameter sets, split
\(t=|\widehat\Delta|\) into \(t\leq1\) and \(t>1\).  The smallest and
largest real parts of \(k\) give integrable majorants, while the Gaussian
controls infinity.  Morera's theorem and Fubini yield holomorphy of both
parameter integrals.  Finally, if
\(-1/h\leq\Re k\leq-1/h+\varepsilon\), then
\[
 |\widehat\Delta(\theta)|^{2\Re k}
 \leq\max\{1,L^{2\varepsilon}\}
      |\widehat\Delta(\theta)|^{-2/h},
 \quad L=\max_{S^{r-1}}|\widehat\Delta|,
\]
which proves endpoint continuity by dominated convergence.
\end{proof}

The flat criterion in \cite[Corollary~4.4]{KostaWindisch2026} gives an
alternative proof of local integrability; the argument above also records
the strictness needed here.

\subsection{Evaluation of the critical spherical integral}

\begin{proposition}[Evaluation of the spherical integral]
\label{prop:critical-sphere}
With the notation of Lemma~\ref{lem:critical-holomorphy},
\begin{equation}\label{eq:sphere-value}
  \int_{S^{r-1}}|\widehat\Delta(\theta)|^{-2/h}\dd\sigma(\theta)
 =r(2\pi)^{r/2}
 \frac{\displaystyle\prod_{i=1}^{r-1}
       \Gamma\!\left(1-\frac{d_i}{h}\right)}
      {\displaystyle\Gamma\!\left(1-\frac1h\right)^r}.
\end{equation}
This holds for crystallographic and noncrystallographic irreducible finite
real reflection groups, including \(H_3,H_4\), and \(I_2(m)\).
\end{proposition}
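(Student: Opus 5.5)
The plan is to obtain \eqref{eq:sphere-value} as a boundary limit of the Macdonald--Mehta--Opdam identity at \(k=-1/h\).

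I take as known the evaluation
\[
 M(k)=(2\pi)^{r/2}\prod_{i=1}^r\frac{\Gamma(1+d_ik)}{\Gamma(1+k)},
 \qquad \Re k>-1/h,
\]
in the normalization where every root has squared length \(2\), as fixed above. This is Opdam's theorem, with Etingof's uniform proof covering all finite Coxeter groups. It holds first for real \(k\geq0\). Both sides are holomorphic on \(\Re k>-1/h\): the left side by Lemma~\ref{lem:critical-holomorphy}, and the right side because \(1+d_ik\) has positive real part there. By the identity theorem the formula therefore persists on the whole half-plane.

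Next I factor \(M(k)\) radially. Writing \(v=\rho\theta\) and using homogeneity of degree \(N\) of \(\widehat\Delta\), Fubini gives, for real \(k\in(-1/h,0)\),
\[
 M(k)=A(k)\int_0^\infty \rho^{r-1+2kN}e^{-\rho^2/2}\dd\rho
 =A(k)\,2^{(r+2kN)/2-1}\,\Gamma\!\left(\tfrac{r+2kN}{2}\right).
\]
Since \(2N=rh\), the gamma argument equals \(\tfrac r2(1+hk)\). As \(k\downarrow-1/h\), the radial factor is therefore asymptotic to \(\tfrac{1}{2}\cdot\tfrac{2}{r(1+hk)}=\tfrac1{r(1+hk)}\).

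On the right-hand side only the factor \(\Gamma(1+d_rk)=\Gamma(1+hk)\) blows up, and it is asymptotic to \(1/(1+hk)\). This uses that \(d_r=h\) has multiplicity one, so every \(d_i<h\) for \(i<r\). The remaining factors tend continuously to \(\prod_{i<r}\Gamma(1-d_i/h)/\Gamma(1-1/h)^r\).

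Dividing, I get
\[
 A(k)=\frac{M(k)}{2^{(r+2kN)/2-1}\Gamma(\tfrac r2(1+hk))}
 \longrightarrow r(2\pi)^{r/2}\frac{\prod_{i<r}\Gamma(1-d_i/h)}{\Gamma(1-1/h)^r}
 \qquad (k\downarrow-1/h).
\]
The endpoint continuity in Lemma~\ref{lem:critical-holomorphy} identifies this limit with \(A(-1/h)\), which is exactly the left side of \eqref{eq:sphere-value}. Nothing in the argument uses crystallographicity, so \(H_3\), \(H_4\) and \(I_2(m)\) are included.

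The main obstacle is the legitimacy of the boundary limit. The identity for \(M\) only holds strictly inside \(\Re k>-1/h\), where \(M\) itself diverges at the endpoint, so the value at \(-1/h\) cannot be read off directly. The key point is that the divergence of \(M\) is carried entirely by the radial integral, while the angular integral \(A(k)\) stays holomorphic slightly past the endpoint. That is precisely the content of Lemma~\ref{lem:critical-holomorphy}, with its margin \(\eta\) coming from the strict parabolic inequality \(h_Q<h\).

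A second point to check carefully is the normalization: the constant \((2\pi)^{r/2}\) in the Macdonald--Mehta formula must match roots of squared length \(2\) and the Gaussian weight \(e^{-\|v\|^2/2}\). I would verify this in rank one, where \(A(k)=2\cdot 2^{k}\) and the formula reads \(\int_\R e^{-x^2/2}|\sqrt2x|^{2k}\dd x=\sqrt{2\pi}\,\Gamma(1+2k)/\Gamma(1+k)\). Setting \(k=-1/2\) in the final expression gives \(A(-1/2)=\sqrt2=2\cdot2^{-1/2}\), consistent with the rank-one value.
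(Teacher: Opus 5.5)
Your proposal is correct and follows essentially the same route as the paper's proof. Both arguments extend Etingof's Macdonald--Mehta identity to \(\Re k>-1/h\) by holomorphy, then factor it radially as \(M(k)=A(k)2^{z-1}\Gamma(z)\), then cancel the simple poles of \(\Gamma(1+hk)\) and \(\Gamma(z)\) to produce the factor \(2N/h=r\), and finally invoke the endpoint continuity of Lemma~\ref{lem:critical-holomorphy}. Your rank-one normalization check is a correct sanity test but is not needed for the argument.
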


\begin{proof}
In Etingof's normalization, the equal-parameter Macdonald--Mehta identity
is \cite[Theorem~3.1]{Etingof2010}
\begin{equation}\label{eq:MM}
 M(k)=(2\pi)^{r/2}\prod_{i=1}^r
       \frac{\Gamma(1+kd_i)}{\Gamma(1+k)}.
\end{equation}
It is initially proved in a nonnegative parameter range.  By
Lemma~\ref{lem:critical-holomorphy}, the integral is holomorphic on
\(\Re k>-1/h\).  Every gamma argument in \eqref{eq:MM} has positive real
part there, so the product is holomorphic as well.  The identity theorem
extends \eqref{eq:MM} throughout this half-plane.

Radial separation gives
\begin{equation}\label{eq:MM-radial}
 M(k)=A(k)2^{z-1}\Gamma(z),
 \qquad z=\frac r2+kN=N\left(k+\frac1h\right).
\end{equation}
Combining \eqref{eq:MM} and \eqref{eq:MM-radial}, for
\(\Re k>-1/h\) we have
\begin{equation}\label{eq:angular-gamma-quotient}
 A(k)=2^{1-z}(2\pi)^{r/2}
 \frac{\displaystyle\prod_{i=1}^r\Gamma(1+kd_i)}
      {\displaystyle\Gamma(1+k)^r\Gamma(z)}.
\end{equation}
Set \(\varepsilon=k+1/h\), so \(z=N\varepsilon\).  Since the top degree
\(d_r=h\) has multiplicity one, the only apparently singular quotient in
\eqref{eq:angular-gamma-quotient} is
\[
 \frac{\Gamma(1+kh)}{\Gamma(z)}
 =\frac{\Gamma(h\varepsilon)}{\Gamma(N\varepsilon)}
 \longrightarrow\frac Nh.
\]
Lemma~\ref{lem:critical-holomorphy} identifies the limit with the convergent
integral \(A(-1/h)\).  Letting \(k\to-1/h\) in
\eqref{eq:angular-gamma-quotient} and using \(2N/h=r\) proves
\eqref{eq:sphere-value}.
\end{proof}

\section{Euclidean normalization and proof of Theorem~\ref{thm:main}}
\label{sec:metric}

For each positive coroot, set
\[
 \widehat\beta=\frac{\sqrt2\,\beta}{\|\beta\|},
 \qquad
 c=\prod_{\beta\in\Psi^+}\frac{\|\beta\|}{\sqrt2}.
\]
Then \(\Delta_\Psi=c\widehat\Delta\).  The absolute normalized
discriminant is \(W\)-invariant, and the sphere is the union, up to null
walls, of \(|W|\) congruent chambers.  Proposition~\ref{prop:transfer} and
Proposition~\ref{prop:critical-sphere} therefore give
\begin{equation}\label{eq:metric-formula}
 \Res_{s=2/h}\xi_\Phi(s)
 =\frac{2(2\pi)^{r/2}}{hD|W|}\,c^{-2/h}
 \frac{\displaystyle\prod_{i=1}^{r-1}
       \Gamma\!\left(1-\frac{d_i}{h}\right)}
      {\displaystyle\Gamma\!\left(1-\frac1h\right)^r}.
\end{equation}

Put \(\ell_i=\|\beta_i\|/\sqrt2\) and
\(L=\operatorname{diag}(\ell_1,\ldots,\ell_r)\).  With the normalization
of \(C_\Psi\) fixed above, \(C_\Psi=L^{-2}G\), hence
\begin{equation}\label{eq:Gram-Cartan}
 \det G=(\det C_\Psi)\prod_{i=1}^r\ell_i^2
       =(\det C_\Phi)\prod_{i=1}^r\ell_i^2.
\end{equation}
For each root length, the number of positive coroots of that length is
\(h/2\) times the number of simple coroots of that length.  The only
non-simply-laced cases are displayed here; ``long/short'' refers to \(\Psi\).
\begin{center}
\begin{tabular}{@{}ccll@{}}
\toprule
\(\Psi\) & \(h/2\) & simple long/short & positive long/short\\
\midrule
\(B_r\), \(r\geq2\) & \(r\) & \(r-1,1\) & \(r(r-1),r\)\\
\(C_r\), \(r\geq2\) & \(r\) & \(1,r-1\) & \(r,r(r-1)\)\\
\(F_4\) & \(6\) & \(2,2\) & \(12,12\)\\
\(G_2\) & \(3\) & \(1,1\) & \(3,3\)\\
\bottomrule
\end{tabular}
\end{center}
The simply laced case is immediate, and the table proves in every case that
\begin{equation}\label{eq:c-simple}
 c^{2/h}=\prod_{i=1}^r\ell_i.
\end{equation}
Since \(D=(\det G)^{-1/2}\), equations \eqref{eq:Gram-Cartan} and
\eqref{eq:c-simple} imply
\begin{equation}\label{eq:metric-cancel}
 \frac{c^{-2/h}}D=\sqrt{\det C_\Phi}.
\end{equation}
Substitution in \eqref{eq:metric-formula} proves \eqref{eq:main};
\eqref{eq:ordinary-residue} follows from \eqref{eq:normalization} and
\eqref{eq:K-degrees}.

\section{Consequences, extensions, and examples}
\label{sec:consequences}

\subsection{A direct irreducible-representation counting law}

\begin{corollary}\label{cor:representation-count}
Let
\[
 \mathcal N_\Phi(X)=\#\{\lambda\in P_+:\dim V_\lambda\leq X\}.
\]
Then, as \(X\to\infty\),
\begin{equation}\label{eq:representation-count}
 \mathcal N_\Phi(X)\sim \mathcal C_\Phi X^{2/h},
\end{equation}
where
\begin{equation}\label{eq:counting-constant}
  \mathcal C_\Phi=
 \frac{(2\pi)^{r/2}\sqrt{\det C_\Phi}}{|W|}
 \left(\prod_{i=1}^r(d_i-1)!\right)^{2/h}
 \frac{\displaystyle\prod_{i=1}^{r-1}
       \Gamma\!\left(1-\frac{d_i}{h}\right)}
      {\displaystyle\Gamma\!\left(1-\frac1h\right)^r}.
\end{equation}
Equivalently,
\(\mathcal C_\Phi=(h/2)\Res_{s=2/h}\zeta_\Phi(s)\).
\end{corollary}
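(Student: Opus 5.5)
We derive the counting law by a Tauberian argument applied to \(\zeta_\Phi\) itself. Write \(\zeta_\Phi(s)=\int_{[1,\infty)}X^{-s}\dd\mathcal N_\Phi(X)\). This is a Stieltjes integral against the nondecreasing step function \(\mathcal N_\Phi\), since \(\dim V_\lambda\geq1\) for every \(\lambda\in P_+\).

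By Theorem~\ref{thm:main} and the meromorphic continuation established in Proposition~\ref{prop:transfer} (via Essouabri), \(\zeta_\Phi\) has abscissa of convergence \(2/h\). It has a simple pole there with positive residue \(R=\Res_{s=2/h}\zeta_\Phi(s)\), given by \eqref{eq:ordinary-residue}. The plan is to apply the Wiener--Ikehara theorem, in the Delange form for Dirichlet series with nonnegative coefficients. Its conclusion is
\[
 \mathcal N_\Phi(X)\sim \frac{R}{2/h}X^{2/h}=\frac h2\,R\,X^{2/h}.
\]
Substituting \eqref{eq:ordinary-residue} and \eqref{eq:main}, the factor \(h/2\) cancels the \(2/h\) in \eqref{eq:main}. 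This leaves exactly \eqref{eq:counting-constant}, and with it the stated equivalence \(\mathcal C_\Phi=(h/2)\Res\zeta_\Phi\).

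The main obstacle is the Tauberian hypothesis on the critical line. Wiener--Ikehara requires that \(\zeta_\Phi(s)-R/(s-2/h)\) extend continuously to the closed half-plane \(\Re s\geq 2/h\). So we must exclude poles at \(2/h+it\) with \(t\neq0\).

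The continuation alone does not give this. I would take it from Essouabri's description of the poles: his Theorems~2--4 place the poles of such polynomial Dirichlet series on a discrete set of real points, and their real parts are determined by the Newton-polyhedron data. Hence the only singularity on \(\Re s=2/h\) is \(s=2/h\).

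If that citation is not clean enough, there is a fallback that needs only the real-axis information already proved. Proposition~\ref{prop:transfer} bounds \(\xi_\Phi-J\) on the real axis, and we pass to counting functions directly. The counting function of \(P(m)\leq Y\) differs from the volume \(\vol\{x\in[1,\infty)^r:P(x)\leq Y\}\) by at most the face counts. Lemma~\ref{lem:faces} shows each face count is \(o(Y^{2/h})\), by the same dyadic argument with the strict inequality \eqref{eq:parabolic-strict}. The volume is computed exactly in polar coordinates as in \eqref{eq:polar-J}, giving \(\frac{1}{DN}\int_{\mathcal S}\Delta_\Psi^{-2/h}\dd\sigma\cdot Y^{2/h}(1+o(1))\). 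The \(o(1)\) follows by dominated convergence using \eqref{eq:domination} and Lemma~\ref{lem:critical-holomorphy}.

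Finally, we rescale. We have \(\mathcal N_\Phi(X)=\#\{m:P(m)\leq K_\Phi X\}\), which contributes the factor \(K_\Phi^{2/h}=\bigl(\prod_i(d_i-1)!\bigr)^{2/h}\) by \eqref{eq:K-degrees}. We then reuse the metric cancellation \eqref{eq:metric-cancel} and Proposition~\ref{prop:critical-sphere}. This recovers \(\mathcal C_\Phi\) with no Tauberian input at all. The fallback is the route I would actually write, since it avoids any statement about the critical line.
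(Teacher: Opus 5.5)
\textbf{The constant in your fallback is off by a factor \(h/2\).} Your Tauberian route gives the right constant, \(\mathcal N_\Phi(X)\sim\frac h2 R X^{2/h}\). The fallback, which you say is the route you would actually write, does not. The volume is not ``computed as in \eqref{eq:polar-J}'': there the factor \(1/N\) is the residue of the Mellin factor \(1/(Ns-r)\). For the volume, the radial integral is the plain \(\int\rho^{r-1}\dd\rho\) from \(R(\theta)\) to \((Y/\Delta_\Psi(\theta))^{1/N}\). Since \(r/N=2/h\),
\[
 \vol\{x\in[1,\infty)^r:P(x)\leq Y\}
 =\frac{Y^{2/h}}{rD}\int_{\mathcal S}
 \bigl[\Delta_\Psi(\theta)^{-2/h}-Y^{-2/h}a(\theta)^{-r}\bigr]_+\dd\sigma(\theta).
\]
So the leading constant is \(\frac1{rD}\int_{\mathcal S}\Delta_\Psi^{-2/h}\dd\sigma=\frac h2\Res_{s=2/h}\xi_\Phi(s)\), not \(\frac1{ND}\int_{\mathcal S}\Delta_\Psi^{-2/h}\dd\sigma=\Res_{s=2/h}\xi_\Phi(s)\). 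With your constant, rescaling by \(K_\Phi\) and using \eqref{eq:metric-cancel} and Proposition~\ref{prop:critical-sphere} yields \(\Res_{s=2/h}\zeta_\Phi(s)=\frac2h\mathcal C_\Phi\). That contradicts both the statement and your own first route; the discrepancy is already \(3/2\) for \(A_2\).

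Once the constant is corrected, the fallback works. The \(1+o(1)\) is simply monotone convergence of the bracket up to \(\Delta_\Psi^{-2/h}\), which is integrable by Lemma~\ref{lem:critical-holomorphy}. The bound \eqref{eq:domination} belongs to the Mellin side and plays no role here. Your face-count step is fine: counts that are \(o(Y^{2/h})\) follow from convergence of the face series by a tail argument.

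\textbf{Comparison with the paper's route.} Corrected, your fallback is a valid Tauberian-free proof, but it is organized differently. You sandwich the lattice count between \(\vol\{x\in[1,\infty)^r:P\le Y\}\) and that volume plus the face counts, mirroring Proposition~\ref{prop:transfer}. This forces you to use Lemma~\ref{lem:faces} a second time. The paper instead compares with dilates of the full star body \(\Omega=\{x\in(0,\infty)^r:P(x)\le1\}\). The backward cubes \(\prod_i(m_i-1,m_i]\) of counted points lie in \(T\Omega\), so the number of \(m\) with \(P(m)\le T^N\) is at most \(T^r\vol(\Omega)\), with no face term at all. The lower bound comes from the bounded truncations \(\Omega_\varepsilon\) and the ceiling map. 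All boundary behaviour is thereby absorbed into the finiteness of \(\vol(\Omega)\).

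Your Tauberian route is also sound and has the right constant. However, it needs from Essouabri the location of all poles on the real axis, which is more than the meromorphic continuation the paper takes from him. That extra input is exactly what the paper's elementary argument avoids.
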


\begin{proof}
Put
\[
 \Omega=\{x\in(0,\infty)^r:P(x)\leq1\},\qquad V=\vol_r(\Omega).
\]
The change of variables and angular integral already used above give
\begin{equation}\label{eq:star-volume}
 V=\frac1{rD}\int_{\mathcal S}
       \Delta_\Psi(\theta)^{-s_0}\dd\sigma(\theta)
  =\frac1{s_0}\Res_{s=s_0}\xi_\Phi(s)<\infty.
\end{equation}
For \(T>0\), define
\[
 H(T)=\#\{m\in\N^r:P(m)\leq T^N\}.
\]
For every counted \(m\), the cube
\(\prod_i(m_i-1,m_i]\) lies in \(T\Omega\), up to coordinate
hyperplanes, because \(P\) is coordinatewise nondecreasing.  Thus
\begin{equation}\label{eq:star-upper}
 H(T)\leq VT^r.
\end{equation}

For \(\varepsilon>0\), let
\(\Omega_\varepsilon=\Omega\cap[\varepsilon,\infty)^r\) and
\(V_\varepsilon=\vol_r(\Omega_\varepsilon)\).  The simple-root factors
and the remaining positive forms show
\(P(x)\geq\varepsilon^{N-1}\max_i x_i\) on this orthant, so
\(\Omega_\varepsilon\) is bounded.  Also
\(V_\varepsilon\uparrow V\) as \(\varepsilon\downarrow0\).  If
\(T>\varepsilon^{-1}\), put \(T_\varepsilon=T-\varepsilon^{-1}\).
For \(x\in T_\varepsilon\Omega_\varepsilon\), let
\(m_i=\lceil x_i\rceil\).  Then
\[
 m_i\leq x_i+1\leq\frac{T}{T_\varepsilon}x_i,
 \qquad
 P(m)\leq\left(\frac{T}{T_\varepsilon}\right)^NP(x)\leq T^N.
\]
The associated unit cubes cover \(T_\varepsilon\Omega_\varepsilon\),
so
\begin{equation}\label{eq:star-lower}
 H(T)\geq V_\varepsilon T_\varepsilon^r.
\end{equation}
Let \(T\to\infty\) and then \(\varepsilon\downarrow0\) in
\eqref{eq:star-upper}--\eqref{eq:star-lower}.  This proves
\(H(T)\sim VT^r\), without a Tauberian theorem.  Since
\(\dim V_\lambda=P(m)/K_\Phi\), take
\(T=(K_\Phi X)^{1/N}\) and use \eqref{eq:star-volume} and
\eqref{eq:ordinary-residue}.  Formula \eqref{eq:counting-constant} then
follows from Theorem~\ref{thm:main}.
\end{proof}

Romik used the \(A_2\) Witten zeta function to obtain a Hardy--Ramanujan-type
asymptotic for the number of all \(n\)-dimensional representations of
\(SU(3)\); that representation-partition problem is distinct from the
cumulative count of irreducible representations in
Corollary~\ref{cor:representation-count} \cite{Romik2017}.

\subsection{Au's conjecture and duality}

\begin{corollary}\label{cor:au}
Theorem~\ref{thm:main} establishes Conjecture~1.3 of Au
\cite{Au2024} for every irreducible crystallographic root system.
\end{corollary}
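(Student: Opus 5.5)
Corollary~\ref{cor:au} is a matter of reading Theorem~\ref{thm:main} against the exact wording of Au's Conjecture~1.3. As I understand it, the conjecture asserts that \(\Res_{s=2/h}\xi_\Phi(s)\) lies in \(\Qbar\) times a finite product of values \(\Gamma(q)\) with \(q\in\mathbb{Q}\), and possibly powers of \(\pi\). I will therefore rewrite \eqref{eq:main} as an algebraic number times such a product and then check the structural side claims attached to the conjecture.

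First, the factor \(2/(h|W|)\) is rational, and \(\sqrt{\det C_\Phi}\) is algebraic because \(\det C_\Phi\) is a positive integer. The gamma factors \(\Gamma(1-d_i/h)\) and \(\Gamma(1-1/h)^{-r}\) are at rational arguments. Since \(d_i<h\) for \(i<r\), these arguments lie in \((0,1)\), so no gamma value is a pole.

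Next comes the factor \((2\pi)^{r/2}\). If the conjecture allows powers of \(\pi\), or \(\pi^{1/2}=\Gamma(1/2)\), this factor is immediate: \((2\pi)^{r/2}=2^{r/2}\Gamma(1/2)^r\), where \(2^{r/2}\in\Qbar\). So the whole residue has the required shape. If the conjecture is phrased for \(\zeta_\Phi\) rather than \(\xi_\Phi\), I must also treat the factor \(K_\Phi^{2/h}\) in \eqref{eq:ordinary-residue}. Here \(K_\Phi=\prod_i(d_i-1)!\) by \eqref{eq:K-degrees}, so \(K_\Phi^{2/h}\) is a positive rational power of an integer and hence algebraic. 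This confirms the statement in either normalization.

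Finally, I will address the \(A_4\) prediction. Here \(h=5\), the degrees are \(2,3,4,5\), \(|W|=120\), \(\det C=5\) and \(r=4\). Substituting into \eqref{eq:main} gives
\[
\frac{2(2\pi)^2\sqrt5}{600}\,\frac{\Gamma(3/5)\Gamma(2/5)\Gamma(1/5)}{\Gamma(4/5)^4}.
\]
The reflection formula gives \(\Gamma(2/5)\Gamma(3/5)=\pi/\sin(2\pi/5)\), and the remaining factors simplify in the same way. I will then compare the resulting closed form, or its numerical value, with Au's recorded constant.

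The main obstacle is this \(A_4\) comparison rather than the algebraicity. Au may state the prediction in a different normalization, or only as a decimal. So I will first pin down his normalization, namely \(\xi\) versus \(\zeta\), and then verify either symbolically, through the reflection and multiplication formulas at fifths, or numerically to his stated precision.
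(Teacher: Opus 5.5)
Your proposal is correct and is essentially the paper's argument: the prefactor \(2^{r/2}\sqrt{\det C_\Phi}/(h|W|)\) is algebraic, every gamma argument in \eqref{eq:main} is rational, and \(\pi^{r/2}=\Gamma(1/2)^r\). Like the paper, you should state explicitly that the denominator \(\Gamma(1-1/h)^{-r}\) contributes negative integral powers of permitted gamma values, which Au's class admits; your remarks on \(K_\Phi^{2/h}\) and the \(A_4\) reduction go slightly beyond the paper's proof of this corollary, which carries out the \(A_4\) identity in a separate subsection using the same reflection (and Gauss multiplication) formulas you indicate.
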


\begin{proof}
Au's conjectured class is \(\Qbar\) times products of gamma values at
rational arguments.  Every argument in \eqref{eq:main} is rational;
\(2^{r/2}\sqrt{\det C_\Phi}/(h|W|)\) is algebraic; and
\(\pi^{r/2}=\Gamma(1/2)^r\).  Thus all powers of \(\pi\), including
half-integral ones, are gamma values times an algebraic factor.  The
denominator contributes negative integral powers of the permitted gamma
values.  Hence \eqref{eq:main} lies in precisely the predicted class.
\end{proof}

In the rank-two and rank-three cases treated in Au's paper, his
Mellin-transform formulas first express the leading residue as a positive
cone integral, which becomes the corresponding chamber integral after radial
decomposition; Proposition~\ref{prop:transfer} gives the same identification
uniformly.

\begin{corollary}[Duality]\label{cor:duality}
The normalized and ordinary leading residues, as well as the counting
constant, are invariant under \(\Phi\leftrightarrow\Phi^\vee\).
\end{corollary}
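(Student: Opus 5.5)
The plan is to check that every ingredient of \eqref{eq:main}, \eqref{eq:ordinary-residue} and \eqref{eq:counting-constant} is unchanged when \(\Phi\) is replaced by \(\Phi^\vee\). Nothing new is needed analytically; the corollary should follow from the closed formulas already proved.

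First, the Weyl group is the same: reflections in \(\alpha\) and in \(\alpha^\vee\) coincide, so \(W(\Phi)=W(\Phi^\vee)\). Hence \(|W|\), the invariant degrees \(d_1\leq\cdots\leq d_r\), the Coxeter number \(h\), and the rank \(r\) all agree for \(\Phi\) and \(\Phi^\vee\). Second, the Cartan matrix of \(\Phi^\vee\) is the transpose of that of \(\Phi\); this is the relation \(C_\Psi=C_\Phi^{\mathsf T}\) already recorded in Section~\ref{sec:conventions}. It gives \(\det C_{\Phi^\vee}=\det C_\Phi\).

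Substituting these into \eqref{eq:main} shows that the normalized residue is invariant. The factor \(\bigl(\prod_i(d_i-1)!\bigr)^{2/h}\) in \eqref{eq:ordinary-residue} depends only on the degrees and \(h\), so the ordinary residue is invariant as well. The same holds for \(\mathcal C_\Phi\) in \eqref{eq:counting-constant}, or equivalently for \((h/2)\Res_{s=2/h}\zeta_\Phi(s)\).

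The only point needing a sentence of care is the well-definedness of \(K_\Phi\) and \(\xi_\Phi\) for \(\Phi^\vee\). Theorem~\ref{thm:main} applies verbatim to \(\Phi^\vee\), since it is again irreducible crystallographic. By \eqref{eq:K-degrees}, \(K_{\Phi^\vee}=\prod_i(d_i-1)!=K_\Phi\), so both normalizations use the same constant. There is no real obstacle here.

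It is worth remarking that \(\zeta_{\Phi}\) and \(\zeta_{\Phi^\vee}\) themselves differ, for example for \(B_r\) and \(C_r\). Only the leading residues coincide, and this is visible from the formula but not from the lattice sums directly.
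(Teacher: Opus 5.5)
Your proposal is correct and follows the paper's proof: the rank, degrees, \(h\), \(|W|\) and \(K_\Phi=\prod_i(d_i-1)!\) depend only on the common Weyl group, and \(\det C_{\Phi^\vee}=\det C_\Phi^{\mathsf T}=\det C_\Phi\), so \eqref{eq:main}, \eqref{eq:ordinary-residue} and \eqref{eq:counting-constant} are unchanged. Your closing aside is only a remark, but its genuine examples are \(B_r,C_r\) with \(r\geq3\), since \(B_2\cong C_2\).
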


\begin{proof}
The degrees, \(h\), \(|W|\), and \(K_\Phi\) are invariant under duality,
and the two Cartan matrices are transposes.
\end{proof}

\subsection{Root-system data and explicit residues}

\begin{center}
\small
\begin{tabular}{@{}cclrc@{}}
\toprule
Type & \(h\) & Degrees & \(|W|\) & \(\det C_\Phi\)\\
\midrule
\(A_r\ (r\geq1)\) & \(r+1\) & \(2,3,\ldots,r+1\) & \((r+1)!\) & \(r+1\)\\
\(B_r,C_r\ (r\geq2)\) & \(2r\) & \(2,4,\ldots,2r\) & \(2^r r!\) & \(2\)\\
\(D_r\ (r\geq4)\) & \(2r-2\) & \(2,4,\ldots,2r-2,r\) & \(2^{r-1}r!\) & \(4\)\\
\(G_2\) & 6 & \(2,6\) & 12 & 1\\
\(F_4\) & 12 & \(2,6,8,12\) & 1152 & 1\\
\(E_6\) & 12 & \(2,5,6,8,9,12\) & 51840 & 3\\
\(E_7\) & 18 & \(2,6,8,10,12,14,18\) & 2903040 & 2\\
\(E_8\) & 30 & \(2,8,12,14,18,20,24,30\) & 696729600 & 1\\
\bottomrule
\end{tabular}
\end{center}
For \(D_r\), the displayed degrees are a multiset; at \(r=4\) they are
\(2,4,4,6\), and the top degree remains unique.

Write
\[
 R_\Phi=\Res_{s=2/h}\xi_\Phi(s).
\]

\begin{corollary}[Classical families]\label{cor:classical-closed}
For \(h=r+1\),
\begin{equation}\label{eq:Ar}
 \boxed{
 R_{A_r}=
 \frac{\bigl[2\sin(\pi/h)\Gamma(1/h)\bigr]^h}
      {\pi h h!}},\qquad r\geq1.
\end{equation}
Thus the trigonometric factor in type \(A\) is exactly
\((2\sin(\pi/h))^h\), and \(R_{A_1}=1\).
For \(r\geq2\),
\begin{equation}\label{eq:BCr}
 \boxed{
 R_{B_r}=R_{C_r}=
 \frac{\sin^r(\pi/(2r))}{\sqrt\pi\,r^{3/2}r!}
 \Gamma\!\left(\frac1{2r}\right)^r.}
\end{equation}
The equality is forced because \(B_r\) and \(C_r\) have the same rank,
Coxeter number, Weyl-group order, Cartan determinant, and degree multiset,
which are all the data entering Theorem~\ref{thm:main}.  For \(r\geq4\),
\begin{equation}\label{eq:Dr}
 \boxed{
 R_{D_r}=
 \frac{2\sin^r(\pi/(2r-2))}{\pi(r-1)^{3/2}r!}
 \Gamma\!\left(\frac{r-2}{2r-2}\right)
 \Gamma\!\left(\frac1{2r-2}\right)^r.}
\end{equation}
At \(r=4\), this reduces further to
\begin{equation}\label{eq:D4-collapse}
 \boxed{
 R_{D_4}=\frac{\Gamma(1/3)^9}{2^{22/3}\sqrt3\,\pi^3}.}
\end{equation}
\end{corollary}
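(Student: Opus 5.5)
Each boxed formula follows by substituting the tabulated data ($h$, the degrees, $|W|$, $\det C_\Phi$) into \eqref{eq:main} and simplifying the gamma quotient. Two classical identities do all the work. The first is the Gauss multiplication formula in the form
\[
\prod_{k=1}^{n-1}\Gamma(k/n)=(2\pi)^{(n-1)/2}n^{-1/2}.
\]
The second is the reflection formula in the form
\[
\Gamma(1-1/h)=\frac{\pi}{\sin(\pi/h)\,\Gamma(1/h)},
\]
which converts the denominator $\Gamma(1-1/h)^r$ into $\sin^r(\pi/h)\,\Gamma(1/h)^r/\pi^r$. In every case the numerator $\prod_{i<r}\Gamma(1-d_i/h)$ is, after the substitution $k\mapsto n-k$, a full or nearly full Gauss product.

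\emph{Type $A_r$.} Here $h=r+1$ and $d_i=i+1$, so the numerator is $\prod_{k=1}^{h-2}\Gamma(k/h)$. This equals the full Gauss product divided by $\Gamma((h-1)/h)=\Gamma(1-1/h)$. The quotient therefore becomes
\[
(2\pi)^{(h-1)/2}\,h^{-1/2}\,\Gamma(1-1/h)^{-h}.
\]
The prefactor is $2(2\pi)^{(h-1)/2}\sqrt h/(h\cdot h!)$. The factors $\sqrt h$ cancel, the powers of $2\pi$ combine to $(2\pi)^{h-1}$, and reflection then gives $2^h\sin^h(\pi/h)\Gamma(1/h)^h/(\pi h\,h!)$. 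At $h=2$ one checks that $4\cdot\pi/(2\pi)=1$, which gives $R_{A_1}=1$.

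\emph{Types $B_r$, $C_r$.} Here $h=2r$ and $d_i=2i$, so $1-d_i/h=1-i/r$ for $i<r$. The numerator is the full Gauss product for $n=r$, namely $(2\pi)^{(r-1)/2}r^{-1/2}$. The prefactor is $2(2\pi)^{r/2}\sqrt2/(2r\,2^r r!)$. Applying reflection to $\Gamma(1-1/(2r))^r$ and collecting powers of $2$ and $\pi$ should yield \eqref{eq:BCr}. The equality $R_{B_r}=R_{C_r}$ is immediate from the table.

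\emph{Type $D_r$.} Here $h=2r-2$, and the degrees other than $h$ are $2,4,\dots,2r-4$ together with $r$. The even degrees give $\prod_{k=1}^{r-2}\Gamma(k/(r-1))$, the full Gauss product for $n=r-1$, namely $(2\pi)^{(r-2)/2}(r-1)^{-1/2}$. The degree $r$ contributes the leftover factor $\Gamma(1-r/h)=\Gamma\bigl((r-2)/(2r-2)\bigr)$. Note that when $r=4$ the degree $4$ appears twice, once as $2\cdot2$ and once as $r$, and both copies are accounted for in this way. With prefactor $2(2\pi)^{r/2}\cdot2/\bigl((2r-2)\,2^{r-1}r!\bigr)$ and reflection in the denominator, the bookkeeping of powers of $2$ and of $r-1$ gives \eqref{eq:Dr}.

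\emph{The case $D_4$.} Setting $r=4$ in \eqref{eq:Dr} gives
\[
\frac{2\sin^4(\pi/6)}{\pi\,3^{3/2}\,24}\,\Gamma(1/3)\,\Gamma(1/6)^4.
\]
The remaining step, which I expect to be the only non-routine one, is to eliminate $\Gamma(1/6)$. I would use the duplication formula at $z=1/6$ together with the reflection formula $\Gamma(1/3)\Gamma(2/3)=2\pi/\sqrt3$. Together these give
\[
\Gamma(1/6)=2^{-1/3}\sqrt{3/\pi}\,\Gamma(1/3)^2.
\]
Substituting $\Gamma(1/6)^4$ produces $\Gamma(1/3)^9$, and the remaining task is to track the constants to reach $2^{-22/3}3^{-1/2}\pi^{-3}$. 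Since every step is an exact identity, I expect this constant chase to be the main place where errors could enter.
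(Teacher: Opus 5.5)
Your proposal is correct and follows the paper's own route: you use Gauss multiplication for the (nearly) complete numerator products at levels $h$, $r$, and $r-1$, reflection at $1/h$ for the denominator, and duplication at $1/6$ with reflection at $1/3$ for $D_4$; the constant chases you left open do close, e.g.\ $R_{D_4}=\Gamma(1/3)\Gamma(1/6)^4/(192\cdot3^{3/2}\pi)$ becomes $\Gamma(1/3)^9/(2^{22/3}\sqrt3\,\pi^3)$. The only slip is in your $A_1$ check: the denominator there is $\pi\cdot2\cdot2!=4\pi$, so the computation reads $4\pi/(4\pi)=1$, not $4\pi/(2\pi)$.
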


\begin{proof}
For type \(A\), Gauss multiplication at \(1/h\) evaluates the complete
product \(\prod_{j=1}^{h-1}\Gamma(j/h)\), and reflection at \(1/h\)
gives \eqref{eq:Ar}.  For types \(B\) and \(C\), the numerator arguments
form the complete set \(1/r,\ldots,(r-1)/r\); Gauss multiplication and
reflection at \(1/(2r)\) give \eqref{eq:BCr}.  For \(D_r\), the even
degrees give the complete Gauss product at level \(r-1\), while the
additional degree \(r\) contributes
\(\Gamma((r-2)/(2r-2))\).  Reflection gives \eqref{eq:Dr}; duplication at
\(1/6\) and reflection at \(1/3\) give \eqref{eq:D4-collapse}.
\end{proof}

Put
\[
 S_6=\sin\frac\pi4\sin\frac\pi3\sin\frac{5\pi}{12}
     =\frac{3+\sqrt3}{8},
 \qquad
 S_8=\sin\frac{4\pi}{15}\sin\frac\pi3
      \sin\frac{2\pi}{5}\sin\frac{7\pi}{15}.
\]

\begin{corollary}[Exceptional types]\label{cor:exceptional-closed}
The exceptional leading residues have the following exact normal forms.
The decimal column gives twenty significant digits.

\begin{center}
\footnotesize
\renewcommand{\arraystretch}{1.55}
\begin{tabularx}{\textwidth}{@{}c>{\raggedright\arraybackslash}Xr@{}}
\toprule
Type & Exact normal form & Decimal\\
\midrule
\(G_2\) &
\(\displaystyle \frac{\Gamma(1/3)^3}{2^{8/3}3^{3/2}\pi}\) &
\(1.8548544575371756409\!\times\!10^{-1}\)\\
\(F_4\) &
\(\displaystyle \frac{2^{4/3}\sin^4(\pi/12)}{1728\sqrt3}
 \frac{\Gamma(1/12)^4}{\Gamma(1/3)}\) &
\(2.4660172867424241539\!\times\!10^{-2}\)\\
\(E_6\) &
\(\displaystyle \frac{2^{7/12}3^{3/8}\sin^6(\pi/12)}
 {38880\sqrt\pi\sqrt{S_6}}\Gamma(1/12)^6\) &
\(2.9667111143742969785\!\times\!10^{-2}\)\\
\(E_7\) &
\(\displaystyle \frac{2^{2/9}\sin^7(\pi/18)}
 {1632960\sin(2\pi/9)\sin(\pi/3)\sin(4\pi/9)}
 \frac{\Gamma(1/18)^7}{\Gamma(7/18)}\) &
\(1.3522189960347867977\!\times\!10^{-3}\)\\
\(E_8\) &
\(\displaystyle \frac{\sin^8(\pi/30)}
 {326592000\,3^{1/5}5^{1/12}\sin(\pi/5)\sqrt{S_8}}
 \frac{\Gamma(1/30)^8}{\Gamma(4/15)}\) &
\(1.1174719544509934428\!\times\!10^{-5}\)\\
\bottomrule
\end{tabularx}
\end{center}

After rational factors and powers of \(\pi\) are separated, let
\(a_6,a_7,a_8\) denote the displayed algebraic coefficients for
\(E_6,E_7,E_8\), respectively.  Their minimal coefficient fields are
\[
 \mathbb Q(\sqrt[3]{2},\sqrt3)\quad(G_2,F_4),
 \qquad [\mathbb Q(\sqrt3,a_6):\mathbb Q]=24,
\]
\[
 [\mathbb Q(2\cos(2\pi/9),a_7):\mathbb Q]=54,
 \qquad
 [\mathbb Q(\zeta_{30})^+(a_8):\mathbb Q]=120.
\]
In particular, the square-root sine products in the \(E_6\) and \(E_8\)
coefficients do not disappear inside the smaller real cyclotomic fields.
\end{corollary}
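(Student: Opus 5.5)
The plan is to specialize Theorem~\ref{thm:main} to each exceptional type, reduce the gamma products to a normal form, and then settle the field-degree claims with a separate algebraic computation. For each of \(G_2,F_4,E_6,E_7,E_8\) I would substitute the table data (\(h\), degrees, \(|W|\), \(\det C_\Phi\)) into \eqref{eq:main}. That gives the residue as an explicit algebraic constant times \(\pi^{r/2}\prod_{i<r}\Gamma(1-d_i/h)/\Gamma(1-1/h)^r\).

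The first step is to clear the denominator. Reflection at \(1/h\) gives \(\Gamma(1-1/h)^{-r}=\Gamma(1/h)^r\sin^r(\pi/h)/\pi^r\), which produces the factor \(\sin^r(\pi/h)\Gamma(1/h)^r\) that appears in every row.

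The numerator \(\prod_{i<r}\Gamma((h-d_i)/h)\) I would handle type by type, using the fact that the exponents \(m_i\) pair up as \(m\leftrightarrow h-m\):

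\begin{itemize}
\item \(G_2\): the numerator is \(\Gamma(2/3)\). Reflection at \(1/3\) and duplication at \(1/6\) (as in \eqref{eq:D4-collapse}) turn \(\Gamma(1/6)^2\Gamma(2/3)\) into powers of \(\Gamma(1/3)\), \(\pi\), and \(2^{1/3}\).
\item \(F_4\): the numerator arguments are \(10/12,6/12,4/12\). I would reflect the \(\Gamma(1/2)\) factor away and pair \(\Gamma(5/6)\) with \(\Gamma(2/3)\) via duplication and reflection, leaving only \(\Gamma(1/12)^4/\Gamma(1/3)\).
\item \(E_6\), \(E_7\), \(E_8\): the arguments are \((h-d_i)/h\). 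For \(E_6\) these are \(10,7,6,4,3\) over \(12\). For \(E_8\) they are \(28,22,18,16,12,10,6\) over \(30\), equivalently the exponents \(\{1,7,11,13,17,19,23\}/30\) shifted. The strategy is to express every \(\Gamma(a/h)\) with \(a\) coprime to \(h\) in terms of \(\Gamma(1/h)\), products of sines, and a small number of leftover values (\(\Gamma(7/18)\), \(\Gamma(4/15)\)), using the Gauss multiplication formula for all divisors of \(h\) together with the reflection formula. This is the Chowla--Selberg/Deligne-type reduction of \(\Gamma\)-monomials at level \(h\), and I would implement it via the distribution relations.
\end{itemize}

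The main obstacle is the \(E_6,E_7,E_8\) case. There one must find the correct combination of multiplication and reflection relations so that everything collapses to a single residual gamma value and the sine products \(S_6\), \(S_8\) appear under square roots. To find it, I would first work with exponent vectors in \(\mathbb Q[\mathbb Z/h]\) modulo the lattice of known relations, solve the linear system there, and only then translate the solution back into explicit constants. The decimal values I would check by high-precision evaluation of both \eqref{eq:main} and the normal form.

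The field-degree claims form a separate algebraic step. I would isolate each coefficient \(a_n\) as a product of radicals of cyclotomic units (\(\sqrt{S_6}\), \(2^{7/12}\), \(3^{3/8}\), and so on). The degree of \(\mathbb Q(\sqrt3,a_6)\) I would compute via Kummer theory over the relevant real cyclotomic field, showing that the radicands are independent modulo the appropriate powers. The key sub-point is that \(S_6=(3+\sqrt3)/8\) and \(S_8\) are not squares in \(\mathbb Q(\sqrt3)\) or \(\mathbb Q(\zeta_{30})^+\), respectively, which I would check by a norm computation. Exact minimal polynomials, computed symbolically, would confirm the degrees \(24\), \(54\), and \(120\).
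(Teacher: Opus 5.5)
Your plan matches the paper's proof. You substitute the exceptional data into Theorem~\ref{thm:main} and reduce the gamma monomials by reflection, duplication and Gauss multiplication; the paper names the specific steps your linear solve in \(\mathbb Q[\mathbb Z/h]\) would find, namely reflections at \(1/4,1/3,5/12,1/2\) for \(E_6\), and multiplication at levels three and five followed by reflections at \(1/5,4/15,1/3,2/5,7/15\) for \(E_8\). You then settle the field degrees by exact norm and minimal-polynomial computations, as the paper does in its certificate file. Two small slips do not affect the outcome: in \(F_4\) the numerator factor is \(\Gamma(4/12)=\Gamma(1/3)\), not \(\Gamma(2/3)\); and since the base fields are real, Capelli's irreducibility criterion for \(x^n-c\), rather than Kummer theory proper, is the right tool.
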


\begin{proof}
Insert the exceptional degree data into Theorem~\ref{thm:main}.  Euler
reflection, duplication, and Gauss multiplication reduce the resulting
gamma monomials.  For \(E_6\), the product of the reflection identities at
\(1/4,1/3,5/12\), and \(1/2\) yields the positive square root
\(S_6^{-1/2}\).  For \(E_8\), the multiplication formulas at levels three
and five, followed by reflection at
\(1/5,4/15,1/3,2/5\), and \(7/15\), yield \(S_8^{-1/2}\).
The exact norm and discriminant calculations are recorded in the accompanying
file \texttt{coefficient\_field\_certificates.txt}; no numerical relation
search is used.
\end{proof}

\subsection{The \texorpdfstring{\(A_4\)}{A4} identity by hand}

Put \(\Gamma_j=\Gamma(j/5)\).  Formula \eqref{eq:Ar} gives
\[
 R_{A_4}=\frac{\Gamma_1^2\Gamma_2^2\Gamma_3^2}
 {60\Gamma_4^3}.
\]
Euler's reflection formula yields
\[
 \Gamma_1\Gamma_4=\frac\pi{\sin(\pi/5)},
 \qquad
 \Gamma_2\Gamma_3=\frac\pi{\sin(2\pi/5)}.
\]
Consequently,
\begin{equation}\label{eq:A4-trig}
 R_{A_4}=\frac{\Gamma_1^5}{60\pi}
 \frac{\sin^3(\pi/5)}{\sin^2(2\pi/5)}.
\end{equation}
If \(s=\sin(\pi/5)\) and \(c=\cos(\pi/5)\), then the trigonometric
factor is \(s/(4c^2)\).  Using
\[
 s^2=\frac{5-\sqrt5}{8},\qquad
 c^2=\frac{3+\sqrt5}{8},
\]
one finds
\[
 \left(\frac{s}{4c^2}\right)^2
 =\frac{50-22\sqrt5}{16}.
\]
All quantities are positive; substitution in \eqref{eq:A4-trig} gives
\begin{equation}\label{eq:A4-final}
  R_{A_4}=\frac{\sqrt{50-22\sqrt5}}{240\pi}\Gamma(1/5)^5,
\end{equation}
which is Au's prediction.  Moreover,
\(K_{A_4}=1!2!3!4!=288\), so the ordinary residue is
\(288^{2/5}R_{A_4}\).

\section{Reduction of the gamma quotient}
\label{sec:gamma-reductions}

This section records exactly what can be reduced using Euler reflection and
Gauss multiplication.  Let \(\mathcal U^-_{\mathbb Q}\) be the
\(\mathbb Q\)-vector space on symbols \([x]\), \(x\in\mathbb Q/\mathbb Z\),
modulo
\begin{equation}\label{eq:distribution-relations}
 [x]+[-x]=0,
 \qquad
 [x]=\sum_{k=0}^{q-1}\left[\frac{x+k}{q}\right]\quad(q\geq2).
\end{equation}
Algebraic factors and powers of \(\pi\) are suppressed in this quotient.
For the gamma quotient in Theorem~\ref{thm:main}, put
\begin{equation}\label{eq:degree-class}
 v_\Phi=
 \sum_{i=1}^{r-1}\left[\frac{h-d_i}{h}\right]
 -r\left[\frac{h-1}{h}\right],
\end{equation}
and let \(\mu(\Phi)\) be the least number of distinct symbols in a
representative of \(v_\Phi\).  Thus every minimality statement below is
only modulo the two relation families in \eqref{eq:distribution-relations};
no completeness assertion for all algebraic relations among rational gamma
values is made or needed.

\begin{proposition}[The invariant in terms of exponents]\label{prop:exponent-count}
Let \(m_i=d_i-1\) be the exponent multiset, in increasing order.  Then
\begin{equation}\label{eq:exponent-class}
 \boxed{
 v_\Phi=
 \sum_{j=1}^{r}\left[\frac{m_j-1}{h}\right]
 -r\left[\frac{h-1}{h}\right].}
\end{equation}
For an integer \(x\), define
\[
 C_h(x)=
 \begin{cases}
 0,&h\mid x,\\
 2(x\bmod h)-h,&h\nmid x,
 \end{cases}
\]
and, for a positive unit \(t<h/2\), put
\(N_\Phi(t)=\#\{j:m_j<t\}\).  Define the linear functional
\(\widehat\beta_t\) by
\(\widehat\beta_t([a/h])=C_h(ta)\).  It respects the relations in
\eqref{eq:distribution-relations}, and satisfies
\begin{equation}\label{eq:signed-count}
 \boxed{
 \widehat\beta_t(v_\Phi)=h\bigl(2N_\Phi(t)-r+1\bigr).}
\end{equation}
\end{proposition}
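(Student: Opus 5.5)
The plan has three steps: rewrite $v_\Phi$ using the symmetry of the exponents, extend $\widehat\beta_t$ to all of $\mathbb Q/\mathbb Z$ through the periodic first Bernoulli function, and evaluate it on $v_\Phi$ using Springer's theory of regular elements.

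\emph{Exponent form \eqref{eq:exponent-class}.} I would use the standard symmetry of exponents, $m_j+m_{r+1-j}=h$. It gives $h-d_i=h-1-m_i=m_{r+1-i}-1$. As $i$ runs over $1,\ldots,r-1$, the index $j=r+1-i$ runs over $2,\ldots,r$. The missing term $j=1$ is $[(m_1-1)/h]=[0]$, and the first relation in \eqref{eq:distribution-relations} forces $2[0]=0$, so $[0]=0$. This proves \eqref{eq:exponent-class}.

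\emph{Compatibility of $\widehat\beta_t$ with the relations.} The formula $\widehat\beta_t([a/h])=C_h(ta)$ only sees symbols with denominator dividing $h$. The distribution relation changes denominators, so I would first extend the functional to all of $\mathbb Q/\mathbb Z$. Let $\bar B_1(y)=\{y\}-\tfrac12$ for $y\notin\mathbb Z$ and $\bar B_1(y)=0$ for $y\in\mathbb Z$. Choose $u\in\widehat{\mathbb Z}^\times$ with $u\equiv t\pmod h$, which is possible since $t$ is a unit mod $h$. Define $\widehat\beta_t([x])=2h\,\bar B_1(ux)$.
\begin{itemize}
\item On $a/h$ this equals $C_h(ta)$.
\item Oddness of $\bar B_1$ gives the first relation.
\item Raabe's multiplication formula $\bar B_1(qy)=\sum_{k=0}^{q-1}\bar B_1(y+k/q)$, applied with $y=ux/q$, gives the second relation. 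Here multiplication by $u$ permutes the $q$ preimages $(x+k)/q$ of $x$.
\end{itemize}
The only care needed is the integer case: the convention $\bar B_1(\text{integer})=0$ makes the formula valid for every $y$.

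\emph{Signed count \eqref{eq:signed-count}.} This is the main obstacle, and the key input is Springer's theory. Let $c$ be a Coxeter element with eigenvalues $\zeta^{m_j}$, where $\zeta=e^{2\pi i/h}$. For a unit $t$, the element $c^t$ is again regular of order $h$, hence conjugate to $c$. Therefore $\{tm_j\bmod h\}=\{m_j\}$ as multisets. Two consequences follow:
\begin{itemize}
\item $t$ is itself an exponent, since $\zeta^t$ is an eigenvalue of $c^t$;
\item $t$ has the same multiplicity as $1$, namely one.
\end{itemize}

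Now set $x_j\equiv tm_j$, so $x_j\in\{1,\ldots,h-1\}$. Then $\widehat\beta_t([(m_j-1)/h])=C_h(x_j-t)$. Splitting into the cases $x_j>t$, $x_j=t$, $x_j<t$ gives
\[
\sum_j C_h(x_j-t)=\sum_j(2x_j-h)-2tr+2h\,\#\{x_j<t\}+h\,\#\{x_j=t\}.
\]
By the permutation property, $\sum_j(2x_j-h)=\sum_j(2m_j-h)=0$, where the last equality is the symmetry of exponents. Also $\#\{x_j<t\}=N_\Phi(t)$ and $\#\{x_j=t\}=1$.

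Finally, $\widehat\beta_t([(h-1)/h])=C_h(-t)=h-2t$. Subtracting $r(h-2t)$ gives
\[
\widehat\beta_t(v_\Phi)=-2tr+2hN_\Phi(t)+h-r(h-2t)=h\bigl(2N_\Phi(t)-r+1\bigr),
\]
which is \eqref{eq:signed-count}.
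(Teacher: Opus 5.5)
Your proposal is correct and follows the paper's proof step for step: exponent duality for the reindexing, the Raabe/distribution identity for compatibility (the paper's $\sum_{k=0}^{q-1}C_{qh}(x+kh)=qC_h(x)$ is exactly your $\bar B_1$ multiplication formula, and your profinite lift $u$ of $t$ makes the extension to other levels explicit), and the same three-case evaluation in which the single term $x_j=t$ contributes the correction $h$. The only difference is how you show that multiplication by a unit permutes the exponents: you use conjugacy of $c^t$ and $c$, while the paper uses Galois invariance of the integral characteristic polynomial of $c$. Your step ``regular of order $h$, hence conjugate to $c$'' tacitly uses that $W$ is a Weyl group; it fails in $I_2(5)$, where $c^2\not\sim c$. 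The paper's rationality argument is more elementary and shows exactly where crystallographicity enters.
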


\begin{proof}
The degree duality \(d_i+d_{r+1-i}=h+2\), which is immediate from the
listed degree multisets, gives
\(h-d_i=m_{r+1-i}-1\).  Reindexing \eqref{eq:degree-class} and adding the
zero term \([(m_1-1)/h]=[0]\) proves \eqref{eq:exponent-class}.

The Coxeter characteristic polynomial has integer coefficients, so
multiplication by a unit modulo \(h\) permutes the exponent multiset.  The
exponent \(1\) occurs once; hence every unit residue occurs once.  If \(b\)
is the representative of \(tm\) in \(\{1,\ldots,h-1\}\), then
\(\sum b=rh/2\).  For \(b\ne t\),
\[
 C_h(b-t)=2(b-t)-h+2h\mathbf 1_{b<t},
\]
while the unique term \(b=t\) contributes the correction \(h\).  Summing
and subtracting the denominator contribution
\(rC_h(t(h-1))=r(h-2t)\) gives \eqref{eq:signed-count}.
The identity
\(\sum_{k=0}^{q-1}C_{qh}(x+kh)=qC_h(x)\) verifies directly that this
functional respects \eqref{eq:distribution-relations}.
\end{proof}

\begin{lemma}[Denominator of a representative with one nonzero symbol]
\label{lem:denominator-confinement}
Let \(N\) be even and let a nonzero class in the span of
\([a/N]\) have a one-symbol representative \(c[x]\), with \(c\ne0\).
Then the order of \(x\) divides \(N\).
\end{lemma}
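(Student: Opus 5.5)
We will detect denominators with a family of linear functionals on \(\mathcal U^-_{\mathbb Q}\) that are insensitive to classes of level \(N\) but see every symbol of larger order. This is the mechanism of \(\widehat\beta_t\) in Proposition~\ref{prop:exponent-count}, with a different normalization.

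\emph{The functionals.} Let \(B_1(y)=y-\tfrac12\) and put \(\psi([y])=B_1(\{y\})\) for \(y\notin\mathbb Z\), and \(\psi([0])=0\). This respects both families in \eqref{eq:distribution-relations}:
\begin{itemize}
\item Oddness: \(B_1(1-y)=-B_1(y)\) for \(0<y<1\).
\item Multiplication: Raabe's identity \(\sum_{k=0}^{q-1}B_1\bigl((y+k)/q\bigr)=B_1(y)\) for \(0\le y<1\). The degree-one Bernoulli polynomial carries the factor \(q^{0}=1\). At \(y=0\) the left side is \(0\), consistent with \(\psi([0])=0\).
\end{itemize}
Next, for an integer \(u\), let \(\sigma_u\) send \([y]\mapsto[uy]\). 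On symbols of denominator dividing \(L\), with \(\gcd(u,L)=1\), this preserves the relations. Indeed, \(k\mapsto uk\) permutes the residues \(k\bmod q\) for \(q\mid L\), so the multiplication relation is carried to another multiplication relation. Thus \(\psi_u=\psi\circ\sigma_u\) is a well-defined functional on the subspace spanned by symbols of level \(L\). We need to state this carefully, working in the finite-level quotient with \(L=\operatorname{lcm}(M,N)\) and all relations among level-\(L\) symbols. This is the main technical point: we must check that the relations used in passing between the two representatives of the class can be taken at a common finite level, or else that \(\psi_u\) extends compatibly. The second route works by choosing \(u\) prime to every denominator involved and noting that the relation set is finite.

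\emph{The argument.} Let \(M\) be the order of \(x\), let \(L=\operatorname{lcm}(M,N)\), and suppose \(M\nmid N\). The class equals both \(c[x]\) and some \(\sum_a c_a[a/N]\). For every \(u\) with \(\gcd(u,L)=1\) and \(u\equiv1\pmod N\), we have \(\psi_u([a/N])=\psi([a/N])\). Hence
\[
 cB_1(\{ux\})=\psi_u(c[x])=\psi_u\Bigl(\sum_a c_a[a/N]\Bigr)=\psi(c[x])=cB_1(\{x\}).
\]
Since \(c\neq0\) and \(B_1\) is injective on \([0,1)\), it follows that \(ux\equiv x\pmod 1\), that is, \(u\equiv1\pmod M\). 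So it suffices to produce a unit \(u\bmod L\) with \(u\equiv1\pmod N\) and \(u\not\equiv1\pmod M\).

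\emph{Existence of \(u\).} Put \(g=\gcd(M,N)\). By the Chinese remainder theorem, the reduction map \((\mathbb Z/L)^\times\to(\mathbb Z/N)^\times\times_{(\mathbb Z/g)^\times}(\mathbb Z/M)^\times\) is onto. So such a \(u\) exists exactly when \(\ker\bigl((\mathbb Z/M)^\times\to(\mathbb Z/g)^\times\bigr)\) is nontrivial, i.e.\ when \(\varphi(M)>\varphi(g)\). Here \(g\) is a proper divisor of \(M\), since \(M\nmid N\). Now \(\varphi(M)=\varphi(g)\) with \(g\mid M\), \(g\neq M\), forces \(M=2g\) with \(g\) odd. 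In that case \(2\mid M\), and \(2\mid N\) because \(N\) is even, so \(2\mid g\), a contradiction. This is precisely where the parity hypothesis enters. Hence the required \(u\) exists, contradicting the previous step, and therefore \(M\mid N\).

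I expect the only delicate step to be the well-definedness of \(\psi_u\) on classes whose representatives use symbols of mixed levels. I would handle it as indicated above: verify once that \(\sigma_u\), for \(u\) coprime to every denominator appearing in the finitely many relations used, maps each relation to a relation.
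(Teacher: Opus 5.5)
Your proof is correct and follows the paper's route. It applies the odd Bernoulli functional $\widetilde B_1$ twisted by units $u\equiv1\pmod N$ that fix the level-$N$ representative, uses injectivity of $B_1$ to force $ux=x$, and uses evenness of $N$ to exclude the only obstruction ($M=2g$ with $g$ odd, which is the $p=2$ case of the paper's prime-by-prime choice of $u$). The paper avoids your well-definedness concern by letting profinite units act as automorphisms of $\mathbb Q/\mathbb Z$; this is equivalent to your choice of an integer $u$ coprime to the finitely many $q$ in the relations used.
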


\begin{proof}
Use the odd Bernoulli function
\[
 \widetilde B_1(y)=
 \begin{cases}
 \{y\}-\tfrac12,&y\ne0,\\
 0,&y=0.
 \end{cases}
\]
It satisfies the distribution relations.  Every profinite unit congruent
to \(1\pmod N\) fixes the original level-\(N\) class and therefore gives
\(\widetilde B_1(ux)=\widetilde B_1(x)\).  A nonzero odd symbol excludes
\(x=0,1/2\), and \(\widetilde B_1\) is injective on the remaining
representatives in \([0,1)\); hence \(ux=x\).  If the order of \(x\)
contained a prime power larger than its counterpart in \(N\), one could
choose such a unit locally at that prime but nontrivially modulo the larger
power, a contradiction.
\end{proof}

\begin{theorem}[Criterion for reduction to a single gamma argument]\label{thm:one-gamma-criterion}
For every irreducible crystallographic \(\Phi\ne A_1\),
\begin{equation}\label{eq:collapse-criterion}
 \boxed{
 \mu(\Phi)=1
 \quad\Longleftrightarrow\quad
 N_\Phi(t)=\frac{(r-1)(t-1)}{h-2}
 \quad\left(1\leq t<\frac h2,\ (t,h)=1\right).}
\end{equation}
When this condition holds,
\begin{equation}\label{eq:collapse-ray}
 \boxed{
 v_\Phi=\frac{h(r-1)}{h-2}[1/h].}
\end{equation}
Consequently,
\[
 \mu(\Phi)=1
 \quad\Longleftrightarrow\quad
 \Phi=A_r\ (r\geq2),\ B_r,\ C_r,\ D_4,\ G_2,\ E_6,
\]
whereas
\[
 \mu(\Phi)=2
 \quad\Longleftrightarrow\quad
 \Phi=D_r\ (r\geq5),\ F_4,\ E_7,\ E_8,
 \qquad \mu(A_1)=0.
\]
\end{theorem}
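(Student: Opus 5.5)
The plan is to work inside the distribution space \(\mathcal U^-_{\mathbb Q}\) at level \(h\), and to use the functionals \(\widehat\beta_t\) of Proposition~\ref{prop:exponent-count} as a complete set of coordinates on the span of the odd symbols \([a/h]\).

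\textbf{Coordinates.} By the classical Kubert--Bass description, the odd part of the level-\(h\) universal distribution tensored with \(\mathbb Q\) has dimension \(\varphi(h)/2\). Its dual is spanned by the odd Bernoulli-type characters \(x\mapsto \widetilde B_1(tx)\) for units \(t\) modulo \(h\), taken up to sign. Since \(C_h(tx)=2h\,\widetilde B_1(tx/h)\), the functionals \(\widehat\beta_t\) with \(1\le t<h/2\) and \((t,h)=1\) separate classes supported at level \(h\). I would either cite this directly or reprove the needed injectivity by a Dirichlet-character (Stickelberger-type) argument.

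\textbf{The forward direction (\(\Rightarrow\)).} Suppose \(\mu(\Phi)=1\), so that \(v_\Phi=c[x]\) with \(c\neq0\). Note that \(v_\Phi\neq0\) for \(\Phi\neq A_1\): take \(t=1\) in \eqref{eq:signed-count} to get \(\widehat\beta_1(v_\Phi)=h(1-r)\neq0\).
\begin{enumerate}
\item Lemma~\ref{lem:denominator-confinement} applies because \(h\) is even for every \(\Phi\ne A_r\) with \(r\) even. In the \(A_{2k}\) case I would pass to level \(2h\) via the distribution relation. Either way, the order of \(x\) divides \(h\), so \(x=a/h\).
\item Multiplying \(x\) by a unit acts on the coordinates by permuting the \(\widehat\beta_t\). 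So I would first reduce to the case where \(a\) is a unit. I expect to show that a nonunit \(a\) yields functional values inconsistent with \(\widehat\beta_1(v_\Phi)\), using the explicit values of \(C_h\).
\item Once \(a\) is a unit, \(\pm[a/h]\) is carried to \([1/h]\), after possibly relabelling \(t\).
\item Compare the values \(\widehat\beta_t(c[1/h])=c(2t-h)\) with \eqref{eq:signed-count}. At \(t=1\) this gives \(c=h(r-1)/(h-2)\). At general \(t\), it gives \(h(2N_\Phi(t)-r+1)=c(2t-h)\), which rearranges to the stated criterion \(N_\Phi(t)=(r-1)(t-1)/(h-2)\).
\end{enumerate}

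\textbf{The converse (\(\Leftarrow\)).} If the criterion holds, then \(v_\Phi\) and \(\frac{h(r-1)}{h-2}[1/h]\) agree on every \(\widehat\beta_t\). By the completeness from the first step, they are equal. This gives \eqref{eq:collapse-ray}, and hence \(\mu\le1\). We also have \(\mu\ge1\) because \(v_\Phi\ne0\).

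\textbf{The classification.} This is a finite check of the criterion against the exponent table. For \(A_r\) the exponents are \(1,\dots,r\) with \(h=r+1\), so \(N(t)=t-1\), and \((r-1)/(h-2)=1\); the criterion holds. For \(B_r\) and \(C_r\) the exponents are the odd numbers, \(h=2r\), and every unit \(t\) is odd, so \(N(t)=(t-1)/2\), matching \((r-1)/(2r-2)=1/2\). The cases \(D_4\), \(G_2\) and \(E_6\) are checked directly at their few units. Failure is shown by exhibiting one bad \(t\):
\begin{itemize}
\item for \(D_r\) with \(r\ge5\), the extra exponent \(r-1\) breaks the count at some unit \(t\) just above \(r-1\) or just below it;
\item for \(F_4\), use \(t=5\): the exponents are \(1,5,7,11\), so \(N(5)=1\), whereas \((3)(4)/10\neq1\);
\item for \(E_7\) and \(E_8\), take \(t=5\) and \(t=7\) respectively.
\end{itemize}

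\textbf{Value \(\mu=2\).} For the failing types I would write \(v_\Phi\) explicitly as a combination of two symbols. I expect the choice \([1/h]\) together with \([1/(h/2)]\), or with a symbol at the level of the extra degree, by fitting the two-parameter family to the functional values.

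\textbf{Main obstacle.} I expect the main obstacle to be the completeness statement in the first step, together with ruling out nonunit \(a\) in the forward direction. For the latter, the plan is to evaluate \(\widehat\beta_t\) on \([a/h]\) with \(\gcd(a,h)>1\) and show that \(t\mapsto C_h(ta)\) cannot be proportional to the affine pattern forced by \eqref{eq:signed-count}.
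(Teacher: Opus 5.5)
\textbf{The gap is in the necessity direction} ($\mu(\Phi)=1\Rightarrow$ criterion), at your steps 2--3. You flag this obstacle but do not resolve it. Once $v_\Phi=c[a/h]$, all you know is $h(2N_\Phi(t)-r+1)=c\,C_h(at)$ for each positive unit $t<h/2$.

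\begin{itemize}
\item If $a$ is a unit with $a\not\equiv\pm1\pmod h$, ``relabelling $t$'' does not turn this into $c\,C_h(t)$. It gives a statement about $N_\Phi(a^{-1}t)$, not the criterion.
\item For nonunit $a$, comparing with ``the affine pattern forced by \eqref{eq:signed-count}'' is circular. The values $h(2N_\Phi(t)-r+1)$ are affine in $t$ only once the criterion is known, and $\widehat\beta_1(v_\Phi)$ alone merely fixes $c$.
\end{itemize}

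Nothing about $C_h$ alone excludes $a\not\equiv\pm1$. The missing input is structural information about the exponents, which the paper uses as follows.
\begin{itemize}
\item A unit modulo $h$ permutes the exponents, because the Coxeter characteristic polynomial has integer coefficients. Since $1$ is an exponent of multiplicity one, every positive unit $t_i<h/2$ is itself an exponent. Hence $N_\Phi(t_1)<N_\Phi(t_2)<\cdots$.
\item Exponent duality gives $r\ge 2N_\Phi(t_i)+2$.
\item Therefore $i\mapsto\widehat\beta_{t_i}(v_\Phi)$ is strictly increasing and negative. After arranging $c>0$ by reflection, $i\mapsto C_h(at_i)$ must be strictly increasing and negative.
\item For a unit $a$, this makes multiplication by $a$ an order-preserving permutation of the positive units, so $a\equiv1$.
\item For a nonunit $a$, $C_h(at)$ depends only on $t$ modulo $H=h/\gcd(a,h)$. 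Strictness plus negativity inject the positive units of level $h$ into the unit classes mod sign of level $H$, so $\varphi(h)\le\varphi(H)$. This leaves only $h=2H$ with $H$ odd. The ordering at the first and last positive units excludes that case when $H\ge5$, and $h=4,6$ are checked by hand.
\end{itemize}

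Without this, two parts of your proof are unproved, because both go through the necessity direction: the ``only if'' half of the classification, and $\mu=2$ for $D_r$ $(r\ge5)$, $F_4$, $E_7$, $E_8$.

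Two smaller slips. First, for odd $h$ (type $A_{2k}$), Lemma~\ref{lem:denominator-confinement} at level $2h$ only gives order dividing $2h$; for example $[1/6]=2[1/3]$. This is harmless only because the criterion holds for all $A_r$ directly, which is how the paper treats type $A$. Second, for $D_r$ the failure comes from any unit $1<t<r-1$, where $N_\Phi(t)=(t-1)/2$ against slope $(r-1)/(2r-4)$. It does not come from units near the extra exponent; units above $r-1$ lie outside the range.

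\textbf{Your sufficiency direction is correct and differs from the paper's.} The paper decomposes the Coxeter spectra into Galois orbits via \eqref{eq:orbit-shift}, reads off the representatives \eqref{eq:class-table}, and checks \eqref{eq:collapse-ray} type by type. You instead use that the $\widehat\beta_t$, with $t$ running over units modulo $h$ up to sign, separate level-$h$ odd classes. This is true:
\begin{itemize}
\item Kubert's theorem bounds the dimension by $\varphi(h)/2$.
\item The matrix $(\widetilde B_1(ta/h))$ has rank $\varphi(h)/2$. For each odd character $\chi$ of conductor $f\mid h$, the column $a=h/f$ has $\chi$-component a nonzero multiple of $B_{1,\bar\chi}\neq0$.
\end{itemize}
This yields \eqref{eq:collapse-ray} uniformly from the criterion. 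The cost is importing Kubert's rank theorem and the nonvanishing of $B_{1,\chi}$.

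The paper's table is more elementary, and it also supplies the two-symbol representatives. In your coordinates these must still be fitted. The pair $[1/h],[2/h]$ works for $D_r$, giving $(r+1)[1/h]-[2/h]$, but not for $E_7$, where a symbol such as $[7/18]$ is needed.
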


\begin{proof}
Order the positive unit classes as
\(1=t_1<\cdots<t_s<h/2\).  Each \(t_i\) is an exponent, so
\(N_\Phi(t_i)\) increases strictly; exponent duality gives
\(r\geq2N_\Phi(t_i)+2\).  Hence the values in
\eqref{eq:signed-count} are strictly increasing and negative.

Suppose first that \(\mu(\Phi)=1\).  For type \(A_r\), \(h=r+1\), so
\((r-1)/(h-2)=1\), and the exponents \(1,\ldots,r\) give
\(N_\Phi(t)=t-1\).  For types \(B_r\) and \(C_r\), \(h=2r\), so
\((r-1)/(h-2)=1/2\); every unit modulo \(2r\) is odd, and the exponents
\(1,3,\ldots,2r-1\) give \(N_\Phi(t)=(t-1)/2\).  Every remaining family
has even \(h\).  Lemma~\ref{lem:denominator-confinement} reduces a
representative to \(c[a/h]\).  Reflection permits \(c>0\).  The column
\(C_h(at_i)\) must then be strictly increasing and negative.  This forces
\(a\equiv\pm1\pmod h\): if \(g=(a,h)\) and \(H=h/g\), strictness gives an
injection from unit classes modulo sign at level \(h\) to those at level
\(H\), so \(\varphi(h)\leq\varphi(H)\).  Since \(H\mid h\), either
\(H=h\), when multiplication by \(a\) is an order-preserving permutation
and hence the identity, or \(h=2H\) with \(H\) odd.  In the latter case
\(a=2b\), and negativity at the first and last positive units contradicts
\(H\geq5\).  The remaining even levels \(h=4,6\) are direct.
Evaluation at \(t=1\) now gives
\(c=h(r-1)/(h-2)\), and comparison at general \(t\) gives
\eqref{eq:collapse-criterion} and \eqref{eq:collapse-ray}.

For the converse, the shifted contribution of a full order-\(d\) Galois
orbit at level \(h\) is, by M\"obius inversion and
\eqref{eq:distribution-relations},
\begin{equation}\label{eq:orbit-shift}
 \Delta_{h,d}=\varphi(d)[1/h]
 -\sum_{q\mid d}\mu_{\mathrm M}(d/q)[q/h],
\end{equation}
where \(\mu_{\mathrm M}\) is the M\"obius function.  Applying this identity
to the Coxeter spectra gives
\begin{equation}\label{eq:class-table}
\begin{array}{c|c}
\Phi&v_\Phi\\ \hline
A_r&h[1/h]\\
B_r,C_r&r[1/(2r)]\\
D_r&r[1/(2r-2)]+[(r-2)/(2r-2)]\\
G_2&3[1/3]\\
F_4&4[1/12]-[1/3]\\
E_6&6[1/12]\\
E_7&7[1/18]-[7/18]\\
E_8&8[1/30]-[4/15].
\end{array}
\end{equation}
Thus every pair satisfying \eqref{eq:collapse-criterion} has the
representative \eqref{eq:collapse-ray}.  For type \(D\), the
condition holds only at \(D_4\); among the exceptional types it holds for
\(G_2\) and \(E_6\).  The necessity already proved excludes a
representative with one nonzero symbol in every remaining case; the table
gives representatives with two nonzero symbols, and
\eqref{eq:signed-count} shows the classes are nonzero.  This proves the
classification.
\end{proof}

\begin{corollary}\label{cor:two-gamma-bound}
For every irreducible crystallographic root system,
\[
 \boxed{\mu(\Phi)\leq2.}
\]
\end{corollary}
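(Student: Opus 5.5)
This is essentially a reading of Theorem~\ref{thm:one-gamma-criterion} together with table~\eqref{eq:class-table}. By definition, \(\mu(\Phi)\) is the least number of distinct symbols in a representative of \(v_\Phi\). To bound it by \(2\), it therefore suffices to exhibit, for each irreducible crystallographic \(\Phi\), some representative of \(v_\Phi\) with at most two distinct symbols. I will go through the classification case by case.

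\textbf{Step 1: the small cases.} For \(A_1\) we have \(h=2\) and \(r=1\), so \eqref{eq:degree-class} has an empty first sum. Since \([1/2]=-[1/2]\) by the reflection relation, the class is zero and \(\mu(A_1)=0\). This is already recorded in Theorem~\ref{thm:one-gamma-criterion}. For \(A_r\) (\(r\geq2\)), \(B_r\), \(C_r\), \(D_4\), \(G_2\) and \(E_6\), Theorem~\ref{thm:one-gamma-criterion} gives \(\mu(\Phi)=1\), with the explicit one-symbol representative \eqref{eq:collapse-ray}.

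\textbf{Step 2: the remaining types.} These are \(D_r\) (\(r\geq5\)), \(F_4\), \(E_7\) and \(E_8\). Table~\eqref{eq:class-table} lists for each a representative with exactly two distinct symbols:
\begin{itemize}
\item \(D_r\): \([1/(2r-2)]\) and \([(r-2)/(2r-2)]\);
\item \(F_4\): \([1/12]\) and \([1/3]\);
\item \(E_7\): \([1/18]\) and \([7/18]\);
\item \(E_8\): \([1/30]\) and \([4/15]\).
\end{itemize}
In every case the two symbols are genuinely distinct elements of \(\mathbb Q/\mathbb Z\), so this gives \(\mu\leq2\) immediately. Theorem~\ref{thm:one-gamma-criterion} in fact shows \(\mu=2\) here, but only the upper bound is needed.

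\textbf{Main obstacle.} The only substantive point is the correctness of table~\eqref{eq:class-table}. If I were verifying it rather than citing it, I would proceed as follows. Group the exponents \(m_j\) into Galois orbits of \((\mathbb Z/h)^\times\) acting on \(m_j/h\), using the orbit-shift identity \eqref{eq:orbit-shift}. Then simplify with the Gauss relations at the relevant levels, working from the exponent form \eqref{eq:exponent-class}. As an independent check, I would test each representative against the functionals \(\widehat\beta_t\) of Proposition~\ref{prop:exponent-count}. For example, for \(E_8\) the exponents are exactly the units mod \(30\), together forming one full orbit, and the resulting single-orbit shift produces \(8[1/30]-[4/15]\).
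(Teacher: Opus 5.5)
Your proposal is correct and follows the paper's proof: the bound is read off from the representatives in \eqref{eq:class-table}, each of which has at most two symbols, and the table itself comes from the orbit decomposition \eqref{eq:orbit-shift} followed by reflection and Gauss reductions. Routing the small cases through Theorem~\ref{thm:one-gamma-criterion} is harmless but unnecessary, because the table alone covers every type, including $A_1$, where $h[1/h]=2[1/2]=0$.
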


\begin{proof}
The representatives in \eqref{eq:class-table} have at most two symbols.
The proof is uniform after the Coxeter spectra are decomposed into their
cyclotomic orbits by \eqref{eq:orbit-shift}; exponent duality alone is not
being used as a substitute for that decomposition.
\end{proof}

\begin{remark}[The contrast between \(E_6\) and \(F_4\)]
Both types have \(h=12\).  Their exponent multisets are
\[
 M_{F_4}=\{1,5,7,11\},
 \qquad
 M_{E_6}=\{1,5,7,11\}\sqcup\{4,8\}.
\]
At the decisive unit \(t=5\),
\[
 N_{F_4}(5)=1\ne\frac{3(5-1)}{10}=\frac65,
 \qquad
 N_{E_6}(5)=2=\frac{5(5-1)}{10}.
\]
The additional order-three Coxeter orbit in \(E_6\) supplies exactly the
missing count.  Thus the opposite conclusions at the same Coxeter number
are explained by the exponents after subtracting one, not by a numerical
search.
\end{remark}

\appendix

\section{Independent numerical verification of the chamber integral}
\label{app:quadrature}

Let
\[
 \Delta^{r-1}=\{u_i\geq0:\ u_1+\cdots+u_r=1\},
 \qquad
 P_\Phi(u)=\prod_{\beta\in\Psi^+}L_\beta(u),
\]
and define
\begin{equation}\label{eq:chamber-period}
 P_{\mathrm{ch}}(\Phi)=\frac2r
 \int_{\Delta^{r-1}}P_\Phi(u)^{-2/h}\,\dd u.
\end{equation}
We derive the identity used in the numerical comparison from the preceding
sections.  In the positive cone write \(x=tu\), where
\(t=x_1+\cdots+x_r\) and \(u\in\Delta^{r-1}\).  Homogeneity gives
\[
 P_\Phi(tu)^{-s}\,\dd x
 =t^{r-1-Ns}P_\Phi(u)^{-s}\,\dd t\,\dd u.
\]
The region \([1,\infty)^r\) corresponds to
\(t\geq T(u):=\max_i u_i^{-1}\).  Hence, initially for
\(\Re s>r/N\),
\[
 J(s)=\int_{\Delta^{r-1}}
 \frac{T(u)^{r-Ns}}{Ns-r}P_\Phi(u)^{-s}\,\dd u.
\]
The boundary convergence proved in Section~\ref{sec:faces} permits passage
to \(s=2/h=r/N\), while Section~\ref{sec:transfer} identifies the lattice
and integral leading coefficients.  The displayed radial factor has residue
\(1/N\).  Since \(N=rh/2\),
\begin{equation}\label{eq:period-residue}
 P_{\mathrm{ch}}(\Phi)=hR_\Phi
 =\frac{2(2\pi)^{r/2}\sqrt{\det C_\Phi}}{|W|}
 \frac{\prod_{i=1}^{r-1}\Gamma(1-d_i/h)}
      {\Gamma(1-1/h)^r}.
\end{equation}

The independent computation integrated the elementary simplex integral in
\eqref{eq:chamber-period}, without using the gamma product in the integration
code.  Positive coroots were generated by exact integer reflection closure.
The simplex was divided into its \(r!\) Hepp order sectors.  On a sector
\(x_{\pi_0}\geq\cdots\geq x_{\pi_{r-1}}\), the substitution
\[
 v_0=1,\qquad v_j=y_1\cdots y_j,\qquad
 x_{\pi_j}=\frac{v_j}{\sum_kv_k}
\]
cancels the projective denominator exactly because \((2/h)N=r\).  The
remaining integrand is a monomial Jacobi weight times a function that is
separately completely monotone.  Tensor products of Gauss--Jacobi rules
therefore give lower bounds, while left-endpoint Gauss--Radau--Jacobi rules
give upper bounds.

The following table reports the closed value from \eqref{eq:period-residue},
the half-width of the independent enclosure, and the number of common
significant digits after the allowance for arithmetic error.  In every row
the closed value lies inside the enclosure.

\begin{center}
\scriptsize
\renewcommand{\arraystretch}{1.2}
\begin{tabular}{@{}crrc@{}}
\toprule
Type & \(P_{\mathrm{ch}}(\Phi)\) & Enclosure half-width & Common digits\\
\midrule
\(A_2\) & 5.2999162508563498719 & \(8.54\times10^{-50}\) & 49\\
\(A_3\) & 9.1669144240271637700 & \(6.65\times10^{-39}\) & 38\\
\(A_4\) & 12.144365427721648164 & \(5.20\times10^{-28}\) & 28\\
\(B_2\) & 2.6220575542921198105 & \(4.75\times10^{-38}\) & 37\\
\(B_3\) & 2.3407593555076023087 & \(6.27\times10^{-30}\) & 29\\
\(B_4\) & 1.6242821124629148599 & \(1.36\times10^{-26}\) & 26\\
\(D_4\) & 4.9232563217488548831 & \(1.25\times10^{-28}\) & 28\\
\(G_2\) & 1.1129126745223053846 & \(1.75\times10^{-32}\) & 32\\
\(F_4\) & 0.29592207440909089847 & \(6.24\times10^{-28}\) & 26\\
\bottomrule
\end{tabular}
\end{center}

For \(E_6\), the order-two tensor rules over all \(720\) sectors gave the
valid but coarse bracket
\[
 0.33338132459886239844
 <P_{\mathrm{ch}}(E_6)<
 0.37940764457434622297.
\]
It contains the closed value
\(0.35600533372491563742\ldots\), but certifies no decimal digit.  The
direct decomposition was not carried out to a defended enclosure for
\(E_7\) or \(E_8\), which have respectively \(5040\) six-dimensional and
\(40320\) seven-dimensional sectors.  The independent corroboration is
therefore thinnest exactly where the residues are smallest and the arithmetic
is hardest: \(E_6\) is only bracketed, while \(E_7\) and \(E_8\) are not
reached.

The Gauss/Radau inequalities are analytic and one-sided.  Nodes and weights
were evaluated at 80--110 decimal working digits, checked against the exact
moments in their ranges of exactness, and enlarged by an explicit allowance
for arithmetic error.  The resulting enclosures are deterministic under that
stated
high-precision model; they are not claimed to be formally machine-verified
interval certificates.

This computation tests only the evaluation of the elementary chamber
integral in Proposition~\ref{prop:critical-sphere}.  It does not independently
test the identification of the leading coefficient in
Proposition~\ref{prop:transfer}, it does not test the holomorphy statement in
Lemma~\ref{lem:critical-holomorphy}, and it reaches the Gram/root-length
normalization only through the composite
equality \eqref{eq:period-residue}.  It is numerical evidence, not a proof
of any theorem in the paper.

The accompanying numerical enclosure records, including the \(A_4\) comparison,
document this independent evidence; they corroborate the formulas, and no proof
above depends on numerical output.

\begingroup
\footnotesize
\bibliographystyle{abbrv}
\bibliography{Witten_Residues_Macdonald_Mehta}
\endgroup

\end{document}